\documentclass[11pt]{amsart}
\usepackage[english]{babel}
\usepackage{graphicx}
\vfuzz2pt 
\hfuzz2pt 
\newtheorem{theorem}{Theorem}[section]

\newtheorem{lemma}[theorem]{Lemma}
\newtheorem{proposition}[theorem]{Proposition}
\theoremstyle{definition}

\theoremstyle{remark}

\numberwithin{equation}{section}
\newcommand{\real}{\mathbb R}

\def\natu{\mathbb N}
\def\dis{\displaystyle}

\begin{document}

\title[Lyapunov inequalities for PDE]
{Lyapunov inequalities for Partial Differential Equations at
radial higher eigenvalues}
\author{Antonio Ca\~{n}ada}
\author{Salvador Villegas}
\thanks{The authors have been supported by the Ministry of Education and
Science of Spain (MTM2008.00988)}
\address{Departamento de An\'{a}lisis
Matem\'{a}tico, Universidad de Granada, 18071 Granada, Spain.}
\email{acanada@ugr.es, svillega@ugr.es}

\subjclass[2000]{35J25, 35B07, 35J20}%
\keywords{Neumann boundary value problems, Lyapunov
inequalities, partial differential equations, radial eigenvalues}%

\dedicatory{To our dear friend and colleague Jean Mawhin on the occasion of his seventieth birthday}%
\begin{abstract}
This paper is devoted to the study of $L_{p}$ Lyapunov-type
inequalities ($ \ 1 \leq p \leq +\infty$) for linear partial
differential equations at radial higher eigenvalues. More
precisely, we treat the case of Neumann boundary conditions on
balls in $\real^{N}$.  It is proved that the relation between the
quantities $p$ and $N/2$ plays a crucial role to obtain nontrivial
and optimal Lyapunov inequalities. By using appropriate minimizing
sequences and a detailed analysis about the number and
distribution of zeros of radial nontrivial solutions, we show
significant qualitative differences according to the studied case
is subcritical, supercritical or critical.
\end{abstract}

\maketitle

\section{Introduction}

\noindent Let us consider the linear problem
\begin{equation}\label{ceq2}
u''(x) + a(x)u(x) = 0, \ x \in (0,L), \ u'(0) = u'(L) = 0
\end{equation}
where $a \in \Lambda$ and $\Lambda$ is defined by
\begin{equation}\label{ceq3}
\Lambda = \{ a \in L^1(0,L)\setminus \{0\}: \dis \int_{0}^{L} a(x)
\ dx \geq 0 \ \ \mbox{and} \ (\ref{ceq2}) \ \mbox{has nontrivial
solutions} \ \}
\end{equation}
The well known $L_1$ Lyapunov inequality states that if $a \in
\Lambda,$ then $\displaystyle \int_{0}^{L} a^{+}(x) \ dx
> 4/L.$ Moreover, the constant $4/L$ is optimal since $\displaystyle \frac{4}{L} = \displaystyle
\inf_{a \in \Lambda} \Vert a^+ \Vert_{L^1(0,L)}$ and this infimum
is not attained (see \cite{camovimia}, \cite{hart} and
\cite{huyo}). This result is as a particular case of the so called
$L_p$ Lyapunov inequalities, $1 \leq p \leq \infty.$ In fact, if
for each $p$ with $1 \leq p \leq \infty,$ we define the quantity
\begin{equation}\label{ceq4}
\beta_{p} \equiv \inf_{a \in \Lambda\cap L^p (0,L)} \ I_{p}(a)
\end{equation}
where
\begin{equation}\label{ceq5}
\begin{array}{c}
I_{p}(a) = \Vert a^{+} \Vert_{L^p (0,L)} = \left( \displaystyle
\int_{0}^{L} (a^{+}(x)) ^{p} \ dx \right) ^{1/p}, \forall
\ a \in \Lambda\cap L^p (0,L), \ 1 \leq p < \infty, \\ \\
I_{\infty}(a) =\mbox{sup ess} \ a^{+}, \ \forall \ a \in
\Lambda\cap L^\infty (0,L),
\end{array}
\end{equation}
\noindent then $\beta_1 = \frac{4}{L}$ and for each $p$ with $1
\leq p \leq \infty,$ it is possible to obtain an explicit
expression for $\beta_{p}$ as a function of $p$ and $L$
(\cite{camovimia}, \cite{zhan}).

Let us observe that the real number zero is the first eigenvalue
of the eigenvalue problem
\begin{equation}\label{n2}
u''(x) + \rho u(x) = 0, \ x \in (0,L), \ u'(0) = u'(L) = 0
\end{equation}
and that for Neumann boundary conditions the restriction on the
function $a$ in the definition of the set $\Lambda,$
\begin{equation}\label{2106111}
a \in L^1(0,L)\setminus \{0\}, \ \dis \int_{0}^{L} a(x) \ dx \geq
0,
\end{equation}
or the more restrictive pointwise condition
\begin{equation}\label{2106112}
a \in L^1 (0,L), \ 0 \prec a,
\end{equation}
are natural if we want to obtain nontrivial optimal Lyapunov
inequalities (see Remark 4 in \cite{camovimia}). Here, for $c,d
\in L^1 (0,L),$ we write $c \prec d$ if $c(x) \leq d(x)$ for a.e.
$x \in [0,L]$ and $c(x) < d(x)$ on a set of positive measure.

In fact, it can be easily proved that if
\begin{equation}\label{bisceq3}
\Lambda_{0} = \{ a \in L^1(0,L): 0 \prec a \ \ \mbox{and} \
(\ref{ceq2}) \ \mbox{has nontrivial solutions} \ \}
\end{equation}
then the constant $\beta_p$ defined in (\ref{ceq4}) satisfies
\begin{equation}\label{bisceq4}
\beta_{p} =  \inf_{a \in \Lambda_{0}\cap L^p (0,L)} \ I_{p}(a)
\end{equation}
Since zero is the first eigenvalue of (\ref{n2}), it is coherent
to affirm that $\beta_p$ is the $L_p$ Lyapunov constant for the
Neumann problem at the first eigenvalue.

On the other hand, the set of eigenvalues of (\ref{n2}) is given
by $\rho_k = k^2 \pi^2/L^2, \ k \in \natu \cup \{ 0 \}$ and if for
each $k \in \natu \cup \{ 0 \},$ we consider the set

\begin{equation}\label{landaenedef}
\Lambda_{k} = \{ a \in L^1(0,L): \ \rho_k \prec a  \ \ \mbox{and}
\ (\ref{ceq2}) \ \mbox{has nontrivial solutions} \ \}
\end{equation} then for each $p$ with $1 \leq p \leq \infty,$
 we can define the constant
\begin{equation}\label{nceq4}
\beta_{p,k} \equiv \inf_{a \in \Lambda_{k}\cap L^p (0,L)} \
I_{p}(a-\rho_k)
\end{equation}
An explicit value for $\beta_{1,k}$ has been obtained by the
authors in \cite{cavijems}. The case $p = \infty$ is trivial
($\beta_{\infty,k} = \rho_{k+1} - \rho_k $) and, to the best of
our knowledge, an explicit value of $\beta_{p,k}$ as a function of
$p,k$ and $L$ is not known when $1 < p < \infty$. Nevertheless,
since $\beta_{1,k} >0,$ we trivially deduce $\beta_{p,k} >0,$ for
each $p$ with $1 \leq p \leq \infty.$

$ $

With regard to Partial Differential Equations, the linear problem
\begin{equation}\label{ceq6}
\left.
\begin{array}{cl}
\Delta u(x)+a(x)u(x)= 0, & x\in \Omega \, \\
 \frac{\partial u}{\partial n}(x)=0,\ \ \ \ \ \ \ &  x\in \partial \Omega
\end{array}
\right\}
\end{equation}
\noindent has been studied in \cite{camovijfa}, where $\Omega
\subset \real^N$ ($N\geq 2$) is a bounded and regular domain,
$\dis \frac{\partial}{\partial n}$ is the outer normal derivative
on $\partial \Omega$ and the function $a:\Omega \rightarrow \real$
belongs to the set $\Gamma$ defined as
\begin{equation}\label{ceq7}
\Gamma = \{ a \in L^{\frac{N}{2}}(\Omega)\setminus \{0\}: \dis
\int_\Omega a(x) \ dx \geq 0 \ \mbox{and} \ (\ref{ceq6}) \
\mbox{has nontrivial solutions}\}
\end{equation}
\noindent if $N \geq 3$ and
$$\Gamma = \{ a:\Omega \rightarrow \real \mbox{ s. t. } \exists
q\in(1,\infty]\\ \ \mbox{with} \ a\in L^q(\Omega)\setminus \{0\},
\
 \dis \int_\Omega a(x) \ dx \geq 0 \ $$

\noindent\  and \ (\ref{ceq6}) has nontrivial solutions$\}$
\newline
\noindent if $N = 2.$ \newline\ Obviously, the quantity
\begin{equation}\label{ceq8}
\gamma_{p} \equiv \inf_{a \in \Gamma\cap L^p (\Omega)} \ \Vert a
^{+} \Vert_{L^p (\Omega)} \ , \ 1 \leq p \leq \infty
\end{equation}
is well defined and it is a nonnegative real number. A remarkable
novelty (see \cite{camovijfa}) with respect to the ordinary case
is that $\gamma_{1} = 0$ for each $N \geq 2.$ Moreover, if $N =2,$
then $\gamma_{p} > 0, \ \forall \ p \in (1,\infty]$ and if $N \geq
3,$ then $\gamma_{p}
> 0$ if and only if $p \geq N/2.$ In contrast to the ordinary case, it seems difficult to obtain an explicit
expressions for $\gamma_{p},$ as a function of $p$, $\Omega$ and $N,$
at least for general domains.

As in the ordinary case, the real number zero is the first
eigenvalue of the eigenvalue problem
\begin{equation}\label{nceq11}
\left.
\begin{array}{cl}
\Delta u(x)+ \rho u(x)= 0,  & x\in \Omega \, \\
 \frac{\partial u}{\partial n}(x)=0\, \ \ \ \ \ \ &  x\in \partial \Omega
\end{array}
\right\}
\end{equation}
so that it is natural to say that the constant $\gamma_p$ defined
in (\ref{ceq8}) is the $L_p$ Lyapunov constant at the first
eigenvalue for the Neumann problem (\ref{ceq6}).

$ $

To our knowledge, there are no significant results concerning to
$L_p$ Lyapunov inequalities for PDE at higher eigenvalues and this
is the main subject of this paper where we provide some new
qualitative results which extend to higher eigenvalues those
obtained in \cite{camovijfa} for the case of the first eigenvalue.
We carry out a complete qualitative study of the question pointing
out the important role played by the dimension of the problem.

Since in the case of ODE our proof are mainly based on an exact
knowledge about the number and distribution of the zeros of the
corresponding solutions (\cite{cavijems}), in the PDE case we are
able to study $L_p$
 Lyapunov inequalities if $\Omega$ is a ball and for radial higher
eigenvalues. It is not restrictive to assume that $\Omega =
B_{\real^N} (0;1) \equiv B_1,$ the open ball in $\real^N$ of
center zero and radius one.

In Section 2 we describe the problem in a precise way and we
present the main results of this paper. In Section 3 we study the
subcritical case, i.e. $1\leq p<\frac{N}{2}$, if $N\geq 3$, and
$p=1$ if $N=2$. To prove the results in this section we will
construct some explicit and appropriate sequences of problems like
(\ref{ceq6}) where Dirichlet type problems play an essential role.
In this subcritical case we prove that the optimal Lyapunov
constants are trivial, i.e., zero.

In Section 4, we treat with the supercritical case:
$p>\frac{N}{2}$, if $N\geq 2$. By using some previous results of
Section 2, about the number and distribution of the zeros of
nontrivial and radial solutions, together with some compact
Sobolev inclusions, we use a reasoning by contradiction to prove
that the optimal Lyapunov constants are strictly positive and they
are attained. In Section 5 we consider the critical case, i.e.
$p=\frac{N}{2}$, if $N\geq 3$. Because in this case the Sobolev
inclusions are continuous but no compact, we demonstrate that the
optimal Lyapunov constants are strictly positive but we do not
know if they are attained or not.

Finally,  we study the case of Neumann boundary conditions but
similar results can be obtained in the case of Dirichlet type
problems.

\section{main results}
From now on, $\Omega = B_1,$ the open ball in $\real^N$ of center
zero and radius one. It is very well known (\cite{delpino}) that
the operator $-\Delta$ exhibits an infinite increasing sequence of
radial Neumann eigenvalues $ 0 = \mu_0 < \mu_1<\ldots < \mu_k <
\ldots $ with $\mu_k \rightarrow + \infty,$ all of them simple and
with associated eigenfunctions $\varphi_k \in C^1 [0,1]$ solving
\begin{equation}\label{2106114}
\begin{array}{c}
-(r^{N-1} \varphi ')' = \mu_k r^{N-1} \varphi, \ 0 < r < 1, \\
\varphi'(0) = \varphi'(1) = 0.
\end{array}
\end{equation}
Moreover, each eigenfunction $\varphi_k$ has exactly $k$ simple
zeros $r_k<r_{k-1}<...<r_1$ in the interval $(0,1)$.

For each integer $k\geq 0$ and number $p$, $1 \leq p \leq \infty,$
we can define the set $$ \Gamma_k = \{ a \in L^{N/2}(B_1): \ a \
\mbox{is a radial function}, \ \mu_k \prec a \ \mbox{and} $$
(\ref{ceq6}) \ \mbox{has radial and nontrivial solutions} $\}$
\noindent if $N\geq 3$ and
$$\Gamma_k = \{ a:B_1 \rightarrow \real \mbox{ s. t. } \exists
q\in(1,\infty]\\ \ \mbox{with} \ a\in L^q(B_1): \ a \ \mbox{is
a radial function},$$
$$\mu_k \prec a \ \mbox{and} \ (\ref{ceq6}) \ \mbox{has radial and
nontrivial solutions} \}$$

\noindent if $N=2$.

We also define the quantity

\begin{equation}\label{2106116}
\gamma_{p,k} = \displaystyle \inf_{a \in \Gamma_k \cap L^p (B_1)}
\ \Vert a-\mu_k \Vert _{L^p (B_1)}
\end{equation}

The main result of this paper is the following.

\begin{theorem}\label{principal}

 Let $k\geq 0$, $N\geq 2$, $1\leq p\leq \infty$. The following statements hold:
\begin{enumerate}
\item If $N=2$ then $\gamma_{p,k}>0 \Leftrightarrow 1<p\leq \infty
$.

\noindent If $N\geq 3$ then $\gamma_{p,k}>0 \Leftrightarrow
\frac{N}{2}\leq p\leq \infty $.

\

\item If $N \geq 2$ and $\frac{N}{2}<p\leq \infty$ then
$\gamma_{p,k}$ is attained.

\end{enumerate}

\end{theorem}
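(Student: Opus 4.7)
\emph{Overall strategy.} The theorem contains three qualitatively distinct assertions: the vanishing of $\gamma_{p,k}$ in the subcritical regime ($p<N/2$ if $N\geq 3$, or $p=1$ if $N=2$), its positivity in the complementary regime, and the attainment of the infimum when $p>N/2$. I would treat each regime with a different technique, following the broad outline announced in Sections 3--5: an explicit concentration construction for the negative result, and a combined Sobolev/H\"older estimate together with the direct method of the calculus of variations for positivity and attainment.

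\emph{Subcritical regime.} To show $\gamma_{p,k}=0$ when $p<N/2$ (or $p=1,\ N=2$), the plan is to build, for each small $\varepsilon>0$, a radial weight $a_\varepsilon$ that coincides with $\mu_k$ outside a radial region of scale $\varepsilon$ and for which the Neumann problem on $B_1$ still has a radial nontrivial solution. A natural realization uses a rescaled first radial Dirichlet eigenfunction $\psi_\varepsilon$ of a ball $B_\varepsilon$, whose eigenvalue scales as $\varepsilon^{-2}$; one then glues $\psi_\varepsilon$ to a solution of the constant-coefficient equation $\Delta u+\mu_k u=0$ on the exterior annulus $B_1\setminus B_\varepsilon$, adjusting the weight on a thin matching shell so that the normal derivatives agree and the Neumann condition at $\partial B_1$ is respected. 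After verifying $\mu_k\prec a_\varepsilon$, the decisive estimate has the form
\begin{equation*}
\|a_\varepsilon-\mu_k\|_{L^p(B_1)}^{p}\leq C\,\varepsilon^{-2p}\cdot\varepsilon^N=C\,\varepsilon^{N-2p},
\end{equation*}
which tends to zero precisely when $p<N/2$; a logarithmic two-dimensional analogue handles $N=2,\ p=1$.

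\emph{Critical/supercritical positivity.} For $p\geq N/2$ (with $p>1$ if $N=2$) I would argue by contradiction. Suppose $\gamma_{p,k}=0$, pick $a_n\in\Gamma_k$ with $\|a_n-\mu_k\|_{L^p}\to 0$ and associated radial solutions $u_n$ normalized by $\|u_n\|_{L^2(B_1)}=1$. Testing the equation against $u_n$ and applying H\"older to the perturbation yields
\begin{equation*}
\|\nabla u_n\|_{L^2}^{2}\leq \mu_k+\|a_n-\mu_k\|_{L^p}\,\|u_n\|_{L^{2p/(p-1)}}^{2},
\end{equation*}
and the condition $p\geq N/2$ is exactly what places the exponent $2p/(p-1)$ inside the Sobolev range for $H^1(B_1)$. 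This bounds $(u_n)$ in $H^1$; Rellich's theorem extracts a strongly convergent subsequence whose limit $u_\infty$ is a radial solution of $-\Delta u_\infty=\mu_k u_\infty$, hence a scalar multiple of $\varphi_k$. The zero-counting information recorded just after \eqref{2106114}, combined with a Sturm-type comparison using the strict inequality $\mu_k\prec a_n$, forces each $u_n$ to carry more sign changes than $\varphi_k$, contradicting the convergence.

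\emph{Attainment and main obstacle.} For statement (2), I would run the same $H^1$ estimate on a minimizing sequence with $\|a_n-\mu_k\|_{L^p}\to\gamma_{p,k}>0$. Weak compactness of $L^p$ gives $a_n\rightharpoonup a_\infty$; the Rellich embedding, now genuinely compact because $2p/(p-1)$ is strictly subcritical when $p>N/2$, gives $u_n\to u_\infty$ strongly in $L^{2p/(p-1)}$; passing to the limit in the weak formulation shows that $u_\infty$ is a nontrivial radial solution of the Neumann problem with weight $a_\infty$, and lower semicontinuity yields $\|a_\infty-\mu_k\|_{L^p}\leq\gamma_{p,k}$. The hard part, in my view, is verifying $a_\infty\in\Gamma_k$: the pointwise bound $a_n\geq\mu_k$ is preserved by weak limits, but the strict part $\mu_k\prec a_\infty$ is not automatic, and the limit could a priori degenerate to a multiple of $\varphi_k$. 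The uniform information on the number and position of the zeros of the radial solutions promised in Section 2 is precisely what should exclude this degeneracy and close the argument.
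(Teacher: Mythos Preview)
Your overall architecture matches the paper's, and your handling of the supercritical attainment is close to what is actually done (the paper makes the ``uniform information on zeros'' precise via a quantitative separation lemma: an $L^p$ bound on $a$ with $p>N/2$ forces the smallest zero of any radial solution, and any two consecutive zeros, to be at least $\varepsilon(N,p,M)$ apart). But your subcritical construction has a real gap. You propose to glue the first Dirichlet eigenfunction of $B_\varepsilon$ to a radial solution of $\Delta u+\mu_k u=0$ on $\varepsilon<|x|<1$ satisfying the Neumann condition at $|x|=1$. The only such radial solution is a multiple of $\varphi_k$, and $\varphi_k$ does \emph{not} vanish at $|x|=\varepsilon$ for small $\varepsilon$, so there is nothing to match at the interface; a bounded adjustment on a thin shell cannot repair this while keeping $a\ge\mu_k$ everywhere.

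The paper's fix is a three-piece construction organized around the innermost zero $r_k$ of $\varphi_k$. On $r_k\le|x|<1$ one keeps $\varphi_k$ (so $a\equiv\mu_k$ there); on $\varepsilon\le|x|<r_k$ one takes the first Dirichlet eigenfunction of the annulus $A(\varepsilon,r_k)$, whose eigenvalue is strictly larger than $\lambda_1(B_{r_k})=\mu_k$ by domain monotonicity and converges to $\mu_k$ as $\varepsilon\to0$ by a separately proved domain-continuity lemma; on $|x|<\varepsilon$ one takes the first Dirichlet eigenfunction of $B_\varepsilon$. All three pieces vanish at the interfaces, so a $C^1$ gluing is possible by scaling, and $\|a_\varepsilon-\mu_k\|_{L^p}^p$ splits into an inner contribution of order $\varepsilon^{N-2p}$ plus an annular contribution that vanishes by the continuity lemma. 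For $N=2$, $p=1$ the exponent $N-2p$ is zero, and the paper replaces the inner Dirichlet eigenfunction by an explicit logarithmic profile depending on a second parameter. Finally, note that at $p=N/2$ your Rellich step is borderline (the relevant exponent $2p/(p-1)=2N/(N-2)$ is critical); the paper treats this case separately, upgrading to $C^1$ convergence of $u_n$ on fixed annuli and then exploiting the scale invariance of the Sobolev constant on the small ball containing the extra zero guaranteed by the $(k{+}1)$-zeros proposition.
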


A key ingredient to prove this theorem is the following
proposition on the number and distribution of zeros of nontrivial
radial solutions of (\ref{ceq6}) when $a \in \Gamma_k.$

\begin{proposition}\label{n+1ceros}
Let $\Omega=B_1$, $k\geq 0$, $a \in \Gamma_k$ and $u$ any
nontrivial radial solution of (\ref{ceq6}). Then $u$ has, at
least, $k+1$ zeros in $(0,1)$. Moreover, if $k\geq 1$ and we
denote by $x_k<x_{k-1}<...<x_1$ the last $k$ zeros of $u$, we have
that

$$r_i\leq x_i\, ,\mbox{   }\forall \ 1\leq i\leq k,$$

\noindent where $r_i$ denotes de zeros of the eigenfunction
$\varphi_k$ of (\ref{2106114}).
\end{proposition}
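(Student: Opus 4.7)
The plan is to compare $u$ with $\phi:=\varphi_k$ via the weighted Wronskian
\[
W(r)=r^{N-1}\bigl(u'(r)\phi(r)-u(r)\phi'(r)\bigr).
\]
Using the equations satisfied by $u$ and $\phi$ one gets
\[
W'(r)=(\mu_k-a(r))\,r^{N-1}\,u(r)\,\phi(r),
\]
while the Neumann conditions at $0$ and $1$, combined with the weight $r^{N-1}$ vanishing at the origin, give $W(0)=W(1)=0$. Since $\mu_k\prec a$, the sign of $W'$ is opposite to that of $u\phi$ a.e., so $W$ is monotone on every subinterval on which $u\phi$ keeps a definite sign; this is the only analytic input needed.

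I would prove the distribution statement by induction on $i\in\{1,\ldots,k\}$: \emph{$u$ has at least $i$ zeros in $[r_i,1)$.} For the base case $i=1$, if $u$ had no zero in $[r_1,1)$, then after adjusting signs one may assume $u>0$ on $[r_1,1)$ and $\phi>0$ on $(r_1,1]$. Simplicity of the zero $r_1$ of $\phi$ gives $\phi'(r_1)>0$, so $W(r_1)=-r_1^{N-1}u(r_1)\phi'(r_1)<0$; but $W$ is nonincreasing on $[r_1,1]$ and $W(1)=0$, contradicting $W(r_1)<0$. For the inductive step, assuming the claim for $i$ one needs one more zero in the disjoint interval $[r_{i+1},r_i)$. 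Otherwise, taking $u>0$ on $[r_{i+1},r_i)$ and $\phi>0$ on $(r_{i+1},r_i)$ yields $W(r_{i+1})<0$, while $W(r_i)=-r_i^{N-1}u(r_i)\phi'(r_i)\ge 0$ (strictly positive if $u(r_i)>0$, using $\phi'(r_i)<0$, and zero if $u(r_i)=0$). The monotonicity $W(r_i)\le W(r_{i+1})<0$ then gives a contradiction in either sub-case. This establishes $x_i\ge r_i$ for every $i$, and in particular at least $k$ zeros of $u$ in $[r_k,1)$.

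For the total count $\ge k+1$, I would argue globally. By hypothesis $0$ is an eigenvalue of the self-adjoint weighted Sturm--Liouville problem
\[
-(r^{N-1}v')'-a(r)\,r^{N-1}v=\lambda\,r^{N-1}v,\qquad v'(0)=v'(1)=0,
\]
whose eigenvalues $\lambda_0(a)<\lambda_1(a)<\cdots$ depend strictly monotonically on $a$ in the Rayleigh sense (eigenfunctions vanish only on a set of measure zero, so the strict part of $\mu_k\prec a$ propagates to a strict inequality between Rayleigh quotients) and whose $j$-th eigenfunction has exactly $j$ simple zeros in $(0,1)$. Since $\lambda_k(\mu_k)=0$ and $\mu_k\prec a$, one has $\lambda_k(a)<0$; writing $0=\lambda_m(a)$ then forces $m\ge k+1$, and because $u$ is a scalar multiple of the $m$-th eigenfunction it has exactly $m\ge k+1$ zeros in $(0,1)$.

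The main obstacle is the Wronskian bookkeeping at the Neumann endpoints: the weight $r^{N-1}$ degenerates at $r=0$ and Dirichlet data are absent, so classical Sturm comparison does not apply verbatim; the identities $W(0)=W(1)=0$ are precisely what let the endpoints $0$ and $1$ play the role of missing zeros of $\phi$. A secondary subtlety, which made me switch to the variational argument for the cardinality statement rather than running a symmetric Wronskian argument on $(0,r_k]$, is the degenerate configuration $u(r_k)=0$ with $u\neq 0$ on $(0,r_k)$: in that case $W\equiv 0$ on $[0,r_k]$ forces $u$ to be proportional to $\phi$ and $a\equiv\mu_k$ on $(0,r_k)$, and one would need to invoke the strict part of $\mu_k\prec a$ on $[r_k,1]$ to recover the $(k+1)$-th zero---a detour the eigenvalue argument bypasses.
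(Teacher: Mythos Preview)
Your proof is correct. For the distribution statement $x_i\ge r_i$, your Wronskian argument is the pointwise version of the paper's integrated one: the paper multiplies the two equations by $\varphi_k$ and $u$, integrates over $B_{r_k}$, $A(r_{i+1},r_i)$ and $A(r_1,1)$, and subtracts to obtain
\[
\int_{\Omega}(a-\mu_k)\,u\,\varphi_k=\bigl[\omega_N r^{N-1}u(r)\varphi_k'(r)\bigr]_{\partial\Omega},
\]
which is exactly the integrated form of your identity $W'=(\mu_k-a)r^{N-1}u\phi$ with the same boundary bookkeeping. The sign discussion is then identical in spirit.

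Where you diverge is in the count $\ge k+1$. The paper stays elementary: after securing one zero in each of the $k$ intervals $[r_k,r_{k-1}),\ldots,[r_1,1)$, it shows that if these are the \emph{only} zeros then, interval by interval, the integral identity forces $a\equiv\mu_k$ on each piece, hence on all of $(0,1]$, contradicting $\mu_k\prec a$. Your route instead invokes strict monotonicity of the radial Neumann eigenvalues $\lambda_j(a)$ in $a$ together with the Sturm oscillation count for the singular weighted problem on $(0,1)$, concluding $0=\lambda_m(a)$ with $m\ge k+1$ and hence exactly $m$ simple zeros for $u$. This buys a slightly stronger conclusion (an exact, simple zero count) and avoids the degenerate endpoint analysis you flagged, at the price of importing the oscillation/monotonicity theory for a Sturm--Liouville problem with degenerate weight $r^{N-1}$ at $r=0$; the paper's cascade argument is self-contained and uses nothing beyond the same integral identity already needed for the distribution statement.
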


For the proof of this proposition we will need the following
lemma. Some of the results of this lemma can be proved in a
different way, by using the version of the Sturm Comparison Lemma
proved in \cite{delpino}, Lemma 4.1, for the $p$-laplacian
operator (see also \cite{hart}). Other results are new.

\begin{lemma}\label{estimas}
Let $k\geq 1$. Under the hypothesis of Proposition \ref{n+1ceros}
we have that

\begin{enumerate}
\item[i)] $u$ vanishes in the interval $(0,r_k]$. If $r_k$ is the
only zero of $u$ in this interval then $a(r)\equiv \mu_k$ in
$(0,r_k]$. \item[ii)] $u$ vanishes in the interval
$[r_{i+1},r_i)$, for $1\leq i\leq k-1$. If $r_{i+1}$ is the only
zero of $u$ in this interval then $u(r_i)=0$ and $a(r)\equiv
\mu_k$ in $[r_{i+1},r_i]$. \item[ii)]  $u$ vanishes in the
interval $[r_1,1)$. If $r_1$ is the only zero of $u$ in this
interval then $a(r)\equiv \mu_k$ in $[r_1,1]$.
\end{enumerate}
\end{lemma}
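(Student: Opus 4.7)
The plan is a Sturm--Picone type integral comparison between $u$ and the radial eigenfunction $\varphi_k$ of (\ref{2106114}). Multiplying the radial form of (\ref{ceq6}), namely $-(r^{N-1}u')'=a(r)r^{N-1}u$, by $\varphi_k$, multiplying (\ref{2106114}) by $u$, subtracting and recognising the left-hand side as an exact derivative, one obtains, for the Wronskian-type quantity $W(r):=r^{N-1}\bigl(\varphi_k(r)u'(r)-u(r)\varphi_k'(r)\bigr)$, the identity
\begin{equation*}
W'(r) = (\mu_k-a(r))\,r^{N-1}\,\varphi_k(r)\,u(r).
\end{equation*}
Since $\mu_k\prec a$, the right-hand side is $\le 0$ wherever $\varphi_k u\ge 0$, with strict negativity on a set of positive measure. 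All three items will follow by integrating this identity over subintervals delimited by consecutive zeros of $\varphi_k$ (together with the endpoints $0$ and $1$).

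For each item I would argue by contradiction, assuming $u$ has no zero in the relevant subinterval $J$. After replacing $\varphi_k$ by $\pm\varphi_k$ and $u$ by $\pm u$ so that both are positive on the interior of $J$, the identity gives $\int_J W'\le 0$; meanwhile a direct evaluation of $W$ at the endpoints of $J$ will yield $\int_J W'>0$, a contradiction. Specifically, for (i) with $J=(0,r_k]$, the Neumann/regularity condition $u'(0)=\varphi_k'(0)=0$ at the origin forces $W(0)=0$, while $\varphi_k'(r_k)<0$ (simple zero) and $u(r_k)>0$ give $W(r_k)>0$. For the interior intervals $J=[r_{i+1},r_i)$ in (ii), the two endpoint contributions to $W(r_i)-W(r_{i+1})$ add with the same positive sign because $\varphi_k'(r_{i+1})>0$ and $\varphi_k'(r_i)<0$. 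For (iii) with $J=[r_1,1)$, the Neumann conditions $\varphi_k'(1)=u'(1)=0$ force $W(1)=0$ and only the nonzero left-endpoint term contributes, again with the right sign.

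The rigidity clauses fall out of the sharp case. If $r_{i+1}$ (respectively $r_k$ or $r_1$) is the only zero of $u$ in the subinterval under consideration, the corresponding endpoint contribution to $W$ vanishes; rerunning the sign analysis then forces the opposite endpoint contribution to vanish as well, which in case (ii) amounts to $u(r_i)=0$. Consequently $\int_J W'=0$ with an integrand of one sign, so $(\mu_k-a)\varphi_k u\equiv 0$ almost everywhere on $J$; since $\varphi_k u$ does not vanish on a set of positive measure in the interior of $J$, we conclude $a\equiv\mu_k$ on $J$, as required.

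The main obstacle is really only bookkeeping: the natural sign convention for $\varphi_k$ alternates from one subinterval to the next, so one has to rerun the sign analysis on each $J$ rather than fix a global convention. Apart from a brief remark on why $u'(0)=\varphi_k'(0)=0$ for $C^1$ radial solutions, no new analytic ingredient is needed; the three vanishing statements and their rigidity companions all reduce to the single integration-by-parts computation above.
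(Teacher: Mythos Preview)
Your proposal is correct and follows essentially the same approach as the paper. The only cosmetic difference is that the paper carries out the integration by parts in $\real^N$ (over $B_{r_k}$, the annuli $A(r_{i+1},r_i)$, and $A(r_1,1)$), obtaining identities such as $\int_{B_{r_k}}(a-\mu_k)u\varphi_k=\omega_N r_k^{N-1}u(r_k)\varphi_k'(r_k)$, whereas you work directly with the radial ODE and the Wronskian $W(r)=r^{N-1}(\varphi_k u'-u\varphi_k')$; passing to radial coordinates shows these are the same identity, and your sign analysis and rigidity argument coincide with the paper's.
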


\begin{proof}

To prove i), multiplying (\ref{ceq6}) by $\varphi_k$ and
integrating by parts in $B_{r_k}$ (the ball centered in the origin
of radius $r_k$), we obtain

$$\int_{B_{r_k}}\nabla u\nabla \varphi_k=\int_{B_{r_k}}a u \varphi_k.$$

On the other hand, multiplying (\ref{2106114}) by $u$ and
integrating by parts in $B_{r_k}$, we have

$$\int_{B_{r_k}}\nabla \varphi_k\nabla u=\mu_k\int_{B_{r_k}}\varphi_k u+\int_{\partial B_{r_k}}u\frac{\partial \varphi_k}{\partial n}.$$

Subtracting these equalities yields

\begin{equation}\label{signo}
\int_{B_{r_k}}\left(a-\mu_k\right)u\varphi_k=\omega_N
r_k^{N-1}u(r_k)\varphi_k'(r_k),
\end{equation}

\noindent where $\omega_N$ denotes de measure of the
$N$-dimensional unit sphere. Assume, by contradiction, that $u$
does not vanish in $(0,r_k]$. We can suppose, without loss of
generality, that $u>0$ in this interval. We can also assume that
$\varphi_k>0$ in $(0,r_k)$. Since $r_k$ is a simple zero of
$\varphi_k$, we have $\varphi_k'(r_k)<0$ and since $a\geq\mu_k$ in
$(0,r_k)$ we obtain a contradiction.

Finally, if $r_k$ is the only zero of $u$ in $(0,r_k]$, equation
\ref{signo} yields \newline
$\int_{B_{r_k}}\left(a-\mu_k\right)u\varphi_k=0$, which gives
$a(r)\equiv \mu_k$ in $(0,r_k]$.

To deduce  ii), we proceed similarly to the proof of part i),
restituting $B_{r_k}$ by $A(r_{i+1},r_i)$ (the annulus centered in
the origin of radii $r_{i+1}$ and $r_i$) and obtaining

$$\int_{A(r_{i+1},r_i)}\left(a-\mu_k\right)u\varphi_k=\omega_N r_i^{N-1}u(r_i)\varphi_k'(r_i)-\omega_N r_{i+1}^{N-1}u(r_{i+1})\varphi_k'(r_{i+1})$$

\noindent and ii) follows easily by arguments on the sign of these
quantities, as in the proof of part i).

To obtain iii), a similar analysis to that in the previous cases
shows that

$$\int_{A(r_i,1)}\left(a-\mu_k\right)u\varphi_k=-\omega_N r_1^{N-1}u(r_1)\varphi_k'(r_1),$$

\noindent and the lemma follows easily as previously.

\end{proof}

{\it Proof of Proposition \ref{n+1ceros}.} Let $k=0$. If we
suppose that $u$ has no zeros in $(0,1]$ and we integrate the
equation $-\Delta u=a\, u$ in $B_1$, we obtain $\int_{B_1}a\,
u=0$, a contradiction. Hence, for the rest of the proof we will
consider $k\geq 1$.

Let $1\leq i\leq k$. By the previous lemma $u$ vanishes in the $i$
disjoint intervals $[r_i,r_{i-1})$,...,$[r_2,r_1)$,$[r_1,1)$.
Therefore $u$ has, at least, $i$ zeros in the interval $[r_i,1)$
which implies that $r_i\leq x_i$.

Finally, let us prove that $u$ has, at least, $k+1$ zeros. From
the previous part, taking $i=k$, $u$ has at least $k$ zeros in the
interval $[r_k,1]$, one in each of the $k$ disjoint intervals
$[r_k,r_{k-1})$,...,$[r_2,r_1)$,$[r_1,1)$. Suppose, by
contradiction, that these are the only zeros of $u$. Then $u$ does
not vanish in $(0,r_k)$ and applying part i) of Lemma
\ref{estimas} we obtain $u(r_k)=0$ and $a\equiv\mu_k$ in
$(0,r_k].$ Applying now part ii) of this lemma, we deduce
$u(r_{k-1})=0$ and $a\equiv\mu_k$ in $[r_k,r_{k-1}]$. Repeating
this argument and using part iii) of the previous lemma we
conclude $u(r_i)=0$, for all $1\leq i\leq k$ and $a\equiv\mu_k$ in
(0,1], which contradicts $a\in\Gamma_k$. \qed

$ $

\noindent For the proof of Theorem \ref{principal}, we will
distinguish three cases: the subcritical case ($1\leq
p<\frac{N}{2}$ if $N\geq 3$, and $p=1$ if $N=2$), the
supercritical case ($p>\frac{N}{2}$ if $N\geq 2$), and the
critical case ($p=\frac{N}{2}$ if $N\geq 3$).

\section{The subcritical case}

In this section, we study the subcritical case, i.e. $1\leq
p<\frac{N}{2}$, if $N\geq 3$, and $p=1$ if $N=2$. In all those
cases we will prove that $\gamma_{p,k}=0$.

$ $

The next lemma is related to the continuous domain dependence of
the eigenvalues of the Dirichlet Laplacian. In fact, the result is
valid under much more general hypothesis (see \cite{fugle}). Here
we show a very simple proof for this special case.

\begin{lemma}\label{autovalorescontinuos}

Let $N\geq 2$ and $R>0$. Then

$$\lim_{\varepsilon \to 0}\lambda_1\left( A(\varepsilon,R)\right)=\lambda_1\left(B_R\right),$$

\noindent where $\lambda_1\left( A(\varepsilon,R)\right)$ and
$\lambda_1\left(B_R\right)$ denotes, respectively, the first
eigenvalues of the Laplacian operator with Dirichlet boundary
conditions of the annulus $A(\varepsilon,R)$ and the ball $B_R$.

\end{lemma}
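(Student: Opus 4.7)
The plan is to exploit the variational characterization of the first Dirichlet eigenvalue,
$$\lambda_1(\Omega) = \inf\left\{\int_\Omega |\nabla u|^2\,:\,u\in H_0^1(\Omega),\ \int_\Omega u^2=1\right\},$$
and to prove the equality in two directions.

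For the easy inequality, I would observe that $A(\varepsilon,R)\subset B_R$ and that extension by zero is an isometric embedding $H_0^1(A(\varepsilon,R))\hookrightarrow H_0^1(B_R)$. Hence $\lambda_1(A(\varepsilon,R))\geq \lambda_1(B_R)$ for every $\varepsilon>0$, and moreover $\varepsilon\mapsto\lambda_1(A(\varepsilon,R))$ is monotone non-increasing as $\varepsilon\to 0^+$, so the limit $\ell:=\lim_{\varepsilon\to 0}\lambda_1(A(\varepsilon,R))$ exists and satisfies $\ell\geq\lambda_1(B_R)$.

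For the reverse inequality, I would build an explicit family of trial functions from the first eigenfunction $\varphi\in C^2(\overline{B_R})$ of the Dirichlet Laplacian on $B_R$, normalized by $\int_{B_R}\varphi^2=1$. Define $u_\varepsilon=\eta_\varepsilon\varphi$, where $\eta_\varepsilon$ is a radial cutoff with $\eta_\varepsilon=0$ on $B_\varepsilon$, $\eta_\varepsilon=1$ outside a small shrinking neighborhood of the origin, and $0\leq\eta_\varepsilon\leq 1$. For $N\geq 3$ one can take $\eta_\varepsilon$ linear in $r$ on $[\varepsilon,2\varepsilon]$, which gives the capacity estimate $\int_{B_R}|\nabla \eta_\varepsilon|^2\lesssim \varepsilon^{N-2}\to 0$. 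For $N=2$, a logarithmic cutoff $\eta_\varepsilon(r)=\log(r/\varepsilon)/\log(1/\sqrt{\varepsilon})$ on $[\varepsilon,\sqrt{\varepsilon}]$ gives $\int_{B_R}|\nabla\eta_\varepsilon|^2\lesssim 1/|\log\varepsilon|\to 0$. In both cases $u_\varepsilon\in H_0^1(A(\varepsilon,R))$.

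Then I would expand
$$\int_{A(\varepsilon,R)}|\nabla u_\varepsilon|^2 = \int \eta_\varepsilon^2|\nabla\varphi|^2 + 2\int \eta_\varepsilon\varphi\,\nabla\varphi\cdot\nabla\eta_\varepsilon + \int \varphi^2|\nabla\eta_\varepsilon|^2,$$
and control each piece: the first term converges to $\int_{B_R}|\nabla\varphi|^2=\lambda_1(B_R)$ by dominated convergence; the third is bounded by $\|\varphi\|_\infty^2\int|\nabla\eta_\varepsilon|^2\to 0$ because $\varphi$ is bounded near the origin; the cross term goes to zero by Cauchy--Schwarz applied to the other two. Similarly $\int u_\varepsilon^2\to\int\varphi^2=1$. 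The Rayleigh quotient of $u_\varepsilon$ thus tends to $\lambda_1(B_R)$, yielding $\lambda_1(A(\varepsilon,R))\leq \int|\nabla u_\varepsilon|^2/\int u_\varepsilon^2\to\lambda_1(B_R)$, hence $\ell\leq\lambda_1(B_R)$, which closes the argument.

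The only real subtlety is choosing the right cutoff so that the capacity of the shrinking hole vanishes; the dichotomy $N\geq 3$ versus $N=2$ forces two different constructions (polynomial vs.\ logarithmic profile), reflecting the fact that a point has zero $H^1$-capacity in both regimes but for qualitatively different reasons. Beyond that, everything is a standard application of the variational principle and dominated convergence, with no compactness needed.
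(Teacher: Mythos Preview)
Your proposal is correct and follows essentially the same route as the paper: the lower bound comes from domain monotonicity of $\lambda_1$, and the upper bound is obtained by testing the Rayleigh quotient with the first eigenfunction of $B_R$ truncated near the origin via a linear cutoff on $[\varepsilon,2\varepsilon]$ when $N\geq 3$ and a logarithmic cutoff on $[\varepsilon,\sqrt{\varepsilon}]$ when $N=2$. The only cosmetic difference is that the paper replaces $\varphi$ by its constant value $\varphi(2\varepsilon)$ (respectively $\varphi(\sqrt{\varepsilon})$) on the inner shell, which slightly shortens the gradient computation by avoiding the product-rule expansion you carry out, but the underlying idea and the capacity estimates are identical.
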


\begin{proof}

For $N\geq 3$ and $\varepsilon \in (0,R/2)$ define the following
radial function $u_\varepsilon\in
H_0^1\left(A(\varepsilon,R)\right)$:

\begin{equation} u_\varepsilon(x) = \left \{%
\begin{array}{l}
\phi_1(x), \ \ \ \ \ \ \ \ \ \ \ \ \ \ \mbox{if} \ \ \ 2\varepsilon\leq \vert x\vert < R , \\ \\
\displaystyle{\frac{\vert
x\vert-\varepsilon}{\varepsilon}}\phi_1(2\varepsilon), \
\mbox{if} \ \ \ \varepsilon <\vert x\vert < 2\varepsilon ,
\end{array}
\right.
\end{equation}

\noindent where $\phi_1$ denotes the first eigenfunction with
Dirichlet boundary conditions of the ball $B_R$. It is easy to
check that
$$\displaystyle{\lim_{\varepsilon\to
0}\int_{A(\varepsilon, 2\varepsilon)}\vert\nabla
u_\varepsilon\vert^2}=\lim_{\varepsilon\to
0}\int_\varepsilon^{2\varepsilon}\omega_Nr^{N-1}\frac{\phi_1(2\varepsilon)^2}{\varepsilon^2}dr=0.$$
In the same way it is obtained $\displaystyle{\lim_{\varepsilon\to
0}\int_{A(\varepsilon, 2\varepsilon)}u_\varepsilon^2=0}$. In
addition, from the variational characterization of the first
eigenvalue it follows that $\lambda_1\left(
A(\varepsilon,R)\right)\leq \int_{A(\varepsilon,R)}\vert\nabla
u_\varepsilon\vert^2/\int_{A(\varepsilon,R)}u_\varepsilon^2$.
Therefore

$$\limsup_{\varepsilon\to 0}\lambda_1\left( A(\varepsilon,R)\right)\leq \limsup_{\varepsilon\to 0}\displaystyle{\frac{\int_{A(\varepsilon,R)}\vert\nabla u_\varepsilon\vert^2}{\int_{A(\varepsilon,R)}u_\varepsilon^2}=\frac{\int_{B_R}\vert\nabla \phi_1\vert^2}{\int_{B_R}\phi_1^2}}=\lambda_1(B_R).$$

On the other hand, using that the first Dirichlet eigenvalue
$\lambda_1(\Omega)$ is strictly decreasing with respect to the the
domain $\Omega$, it follows that $\lambda_1\left(
A(\varepsilon,R)\right)>\lambda_1\left(B_R\right)$. Thus

$$\liminf_{\varepsilon\to 0}\lambda_1\left( A(\varepsilon,R)\right)\geq\lambda_1(B_R)$$ and the lemma follows for $N\geq 3$.

The same proof works for $N=2$ if we consider, for every
$\varepsilon\in \left(0,\min\{ 1,R^2\}\right)$, the radial
function $u_\varepsilon\in H_0^1\left(A(\varepsilon,R)\right)$:

\begin{equation} u_\varepsilon(x) = \left \{%
\begin{array}{l}
\phi_1(x), \ \ \ \ \ \ \ \ \ \ \ \ \ \ \mbox{if} \ \ \ \sqrt{\varepsilon}\leq \vert x\vert < R , \\ \\
\displaystyle{\frac{\log\vert
x\vert-\log\varepsilon}{\log\sqrt{\varepsilon}-\log\varepsilon}\phi_1(\sqrt{\varepsilon)}},
\  \mbox{if} \ \ \ \varepsilon <\vert x\vert < \sqrt{\varepsilon}.

\end{array}
\right.
\end{equation}

\end{proof}

\begin{lemma}\label{subcritico}

Let $k\geq 0$, $N\geq 3$ and $1\leq p< N/2$. Then
$\gamma_{p,k}=0$.

\end{lemma}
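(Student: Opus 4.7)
The plan is to exhibit, for each large $n$, a function $a_n \in \Gamma_k\cap L^p(B_1)$ with $\Vert a_n-\mu_k\Vert_{L^p(B_1)}\to 0$. The driving idea is to concentrate a large perturbation of $\mu_k$ on a tiny centered ball of radius $\varepsilon_n\to 0$, tuned so that the full Neumann problem still admits a nontrivial radial solution. A bump of height $h$ on a ball of radius $\varepsilon$ has $L^p$-norm of order $h\,\varepsilon^{N/p}$, so taking $h\sim\varepsilon^{-2}$ gives $\Vert\cdot\Vert_{L^p}\sim\varepsilon^{(N-2p)/p}$, which vanishes precisely in the subcritical range $p<N/2$.

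Concretely, I would set $a_n(x)=\mu_k+\lambda_n\chi_{B_{\varepsilon_n}}(x)$ with $\lambda_n>0$ to be determined, and construct a candidate radial $u_n$ by gluing two pieces. On the annulus $A(\varepsilon_n,1)$ take $u_n=\varphi_k$, the $k$-th radial Neumann eigenfunction from (\ref{2106114}); then $-\Delta u_n=\mu_k u_n$ and $u_n'(1)=0$ are automatic. Inside $B_{\varepsilon_n}$ take the unique (up to scaling) bounded radial solution of $-\Delta u=(\mu_k+\lambda_n)u$ with $u'(0)=0$, which has the form $u_n(r)=c_n\,\Phi\bigl(r\sqrt{\mu_k+\lambda_n}\bigr)$ for a fixed Bessel-type profile $\Phi$ depending only on $N$. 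Imposing $C^1$ matching at $r=\varepsilon_n$ and dividing to eliminate $c_n$ reduces to the single scalar equation
\[
\omega_n\,\frac{\Phi'(\omega_n)}{\Phi(\omega_n)}=\frac{\varepsilon_n\,\varphi_k'(\varepsilon_n)}{\varphi_k(\varepsilon_n)},\qquad \omega_n:=\varepsilon_n\sqrt{\mu_k+\lambda_n}.
\]
Since $\varphi_k\in C^1[0,1]$ with $\varphi_k(0)\neq 0$ and $\varphi_k'(0)=0$, the right-hand side is $O(\varepsilon_n^2)$. The left-hand side vanishes at the first positive zero $\omega_0$ of $\Phi'$ with nonzero derivative there, so the implicit function theorem produces $\omega_n\to\omega_0$ and hence $\lambda_n=\omega_n^2/\varepsilon_n^2-\mu_k$ of order $1/\varepsilon_n^2$.

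With $\lambda_n$ so determined,
\[
\Vert a_n-\mu_k\Vert_{L^p(B_1)}^{\,p}=\lambda_n^{\,p}\,|B_{\varepsilon_n}|=C_N\,\omega_0^{2p}\,\varepsilon_n^{\,N-2p}(1+o(1))\to 0,
\]
and similarly $\Vert a_n\Vert_{L^{N/2}(B_1)}$ stays bounded, so $a_n\in L^{N/2}(B_1)$. Clearly $a_n$ is radial and $\mu_k\prec a_n$, while by construction $u_n$ is a $C^1$ radial solution of (\ref{ceq6}) with $a=a_n$; thus $a_n\in\Gamma_k$ and the conclusion $\gamma_{p,k}=0$ follows.

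The main obstacle is the matching step: one must verify that the scalar equation above genuinely admits $\omega_n\to\omega_0$ (which is where $\Phi''(\omega_0)\ne 0$ comes in, since $\omega_0$ is a strict extremum of $\Phi$) and that the resulting inside piece has the appropriate sign at $r=\varepsilon_n$ to match $\varphi_k(\varepsilon_n)$ continuously. Once this is done, the first zero of $\Phi$---which by the standard interlacing of zeros of Bessel functions lies strictly below $\omega_0$---produces exactly one zero of $u_n$ in $(0,\varepsilon_n)$, and $u_n$ attains the $k+1$ zeros required by Proposition \ref{n+1ceros}, the remaining $k$ being $r_1,\dots,r_k$ inherited from $\varphi_k$.
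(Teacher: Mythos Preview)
Your argument is correct, though it takes a genuinely different route from the paper's own proof. Both constructions concentrate a bump of height $\sim\varepsilon^{-2}$ on a centered ball of radius $\varepsilon$, so that $\Vert a-\mu_k\Vert_{L^p}\sim\varepsilon^{(N-2p)/p}\to 0$. The difference lies in how the radial solution is manufactured. The paper glues \emph{three} pieces: $\varphi_k$ on the outer annulus $A(r_k,1)$, the first Dirichlet eigenfunction of the middle annulus $A(\varepsilon,r_k)$, and the first Dirichlet eigenfunction of $B_\varepsilon$. Because the interfaces are placed at $r=r_k$ and $r=\varepsilon$, where the adjacent pieces both vanish, the $C^1$ matching reduces to a mere scaling and requires no analysis; the price is an auxiliary lemma on the continuity of $\lambda_1(A(\varepsilon,r_k))$ as $\varepsilon\to 0$ to control the middle piece. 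You instead keep $\varphi_k$ on \emph{all} of $A(\varepsilon_n,1)$ and use a single Bessel-type inner solution, so the perturbation lives only on $B_{\varepsilon_n}$; this avoids the annulus lemma entirely and treats all $k\geq 0$ uniformly, but the interface now sits where $\varphi_k(\varepsilon_n)\neq 0$, forcing the nontrivial scalar matching equation and the implicit-function step you identify. That step is indeed routine: at the first positive zero $\omega_0$ of $\Phi'$ one has $\Phi(\omega_0)\neq 0$ and $\Phi''(\omega_0)\neq 0$, so $F(\omega)=\omega\Phi'(\omega)/\Phi(\omega)$ satisfies $F(\omega_0)=0$, $F'(\omega_0)\neq 0$, and the right-hand side tends to $0$ (being $-\mu_k\varepsilon_n^2/N+o(\varepsilon_n^2)$). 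Your final paragraph on the $k{+}1$ zeros is a consistency check rather than a needed ingredient: membership in $\Gamma_k$ only requires $a_n$ radial, $a_n\in L^{N/2}$, $\mu_k\prec a_n$, and the existence of a nontrivial radial solution, all of which are immediate from your construction.
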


\begin{proof} If $k=0$, this lemma follows from \cite[Lem. 3.1]{camovijfa}. In this lemma a family of bounded, positive and radial solutions were used. Hence, for the rest of the proof we will consider $k\geq 1$.

To prove this lemma we will construct an explicit family
$a_\varepsilon\in \Gamma_k$ such that $\lim_{\varepsilon\to
0}\Vert a_\varepsilon-\mu_k \Vert_{L^p (B_1)} = 0.$ To this end,
for every $\varepsilon\in(0,r_k)$, define
$u_\varepsilon:B_1\rightarrow \real$ as the radial function

\begin{equation}\label{solucionsubcritico} u_\varepsilon = \left \{%
\begin{array}{l}
\varphi_k, \ \ \ \ \ \ \ \ \ \ \ \ \ \ \mbox{if} \ \ \ r_k\leq \vert x\vert < 1 , \\ \\
\phi_1\left( A(\varepsilon,r_k)\right), \  \mbox{if} \ \ \ \varepsilon \leq \vert x\vert < r_k , \\ \\
\phi_1\left(B_\varepsilon\right), \ \ \ \ \ \ \ \ \mbox{if} \ \ \
\vert x\vert < \varepsilon .
\end{array}
\right.
\end{equation}

\noindent where $\phi_1\left( A(\varepsilon,r_k)\right)$ and
$\phi_1\left(B_\varepsilon\right)$ denotes, respectively, the
first eigenfunctions with Dirichlet boundary conditions of the
annulus $A(\varepsilon,r_k)$ and the ball $B_\varepsilon$.
Moreover these eigenfunctions are chosen such that $u_\varepsilon
\in C^1(\overline{B_1})$.

Then, it is easy to check that $u_\varepsilon$ is a solution of
(\ref{ceq6}), being $a_\varepsilon\in L^\infty(B_1)$ the radial
function

\begin{equation}\label{infimosubcritico} a_\varepsilon = \left \{%
\begin{array}{l}
\mu_k, \ \ \ \ \ \ \ \ \ \ \ \ \ \ \mbox{if} \ \ \ r_k< \vert x\vert < 1 , \\ \\
\lambda_1\left( A(\varepsilon,r_k)\right), \  \mbox{if} \ \ \ \varepsilon < \vert x\vert < r_k , \\ \\
\lambda_1\left(B_\varepsilon\right), \ \ \ \ \ \ \ \ \mbox{if} \ \
\ \vert x\vert < \varepsilon ,
\end{array}
\right.
\end{equation}

\noindent where $\lambda_1\left( A(\varepsilon,r_k)\right)$ and
$\lambda_1\left(B_\varepsilon\right)$ denotes, respectively, the
first eigenvalues with Dirichlet boundary conditions of de annulus
$A(\varepsilon,r_k)$ and the ball $B_\varepsilon$. Since the first
Dirichlet eigenvalue $\lambda_1(\Omega)$ is strictly decreasing
with respect to the the domain $\Omega$, it follows that

$$\lambda_1\left( A(\varepsilon,r_k)\right), \lambda_1\left(B_\varepsilon\right)>\lambda_1\left(B_{r_k}\right)=\mu_k,$$

\noindent which gives $a_\varepsilon\in \Gamma_k$. (The equality $\lambda_1\left(B_{r_k}\right)=\mu_k$ follows from the fact that $\varphi_k$ is a positive solution of $-\Delta \varphi=\mu_k \varphi$ in $B_{r_k}$ which vanishes on $\partial B_{r_k}$). Let us estimate
the $L_p$-norm of $a_\varepsilon-\mu_k$:

\begin{equation}\label{chorizo} \begin{array}{l}
\dis \Vert a_\varepsilon-\mu_k
\Vert_{L^p(B_1)}=\left(\int_{B_\varepsilon}\left(\lambda_1\left(B_\varepsilon\right)-\mu_k\right)^p+\int_{A(\varepsilon,r_k)}\left(\lambda_1\left(A(\varepsilon,r_k)\right)-\mu_k\right)^p
\right)^{\frac{1}{p}}= \\
\dis
=\left(\left(\lambda_1\left(B_\varepsilon\right)-\mu_k\right)^p\frac{\omega_N
\varepsilon^N}{N}+\left(\lambda_1\left(A(\varepsilon,r_k)\right)-\mu_k\right)^p\frac{\omega_N
(r_k^N-\varepsilon^N)}{N} \right)^{\frac{1}{p}}.
\end{array}
\end{equation}

Taking into account that
$\lambda_1\left(B_\varepsilon\right)=\lambda_1(B_1)/\varepsilon^2$,
$\lambda_1\left(B_{r_k}\right)=\mu_k$, using $N>2p$, and applying
Lemma \ref{autovalorescontinuos}, we conclude

$$\lim_{\varepsilon\to 0}\Vert a-\mu_k \Vert_{L^p(B_1)}\leq\lim_{\varepsilon\to 0}\left(\frac{\lambda_1(B_1)^p}{\varepsilon^{2p}}\frac{\omega_N \varepsilon^N}{N}+
\left(\lambda_1\left(A(\varepsilon,r_k)\right)-\mu_k\right)^p\frac{\omega_N
(r_k^N-\varepsilon^N)}{N}\right)^{\frac{1}{p}}=0,$$

\noindent and the proof is complete. \end{proof}

\begin{lemma}\label{critico2}
Let $k\geq 0$, $N=2$ and $p=1$. Then $\gamma_{1,k}=0$.
\end{lemma}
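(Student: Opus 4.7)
The plan is to handle the two ranges of $k$ separately: $k=0$ by invoking the cited $N=2$ case of the first-eigenvalue result, and $k\geq 1$ by adapting the construction of Lemma~\ref{subcritico}, paying special attention to the fact that $N=2,\,p=1$ is exactly the borderline case where the inner-ball piece of that construction fails to vanish in the limit.

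First, for $k=0$ the statement is exactly \cite[Lem.~3.1]{camovijfa} specialized to $N=2$ and $p=1$, and no further argument is needed.

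For $k\geq 1$, I would try to imitate the proof of Lemma \ref{subcritico}: define $u_\varepsilon$ piecewise radial, equal to $\varphi_k$ on $\{r_k\leq |x|\leq 1\}$, equal (up to normalization and sign) to $\phi_1(A(\varepsilon,r_k))$ on $\{\varepsilon\leq|x|\leq r_k\}$, and equal to a suitable radial function on the inner ball $B_\varepsilon$, with $C^1$ matching at $|x|=r_k$ and $|x|=\varepsilon$. The outer two pieces already give $k$ zeros (from $\varphi_k$) and a further zero at $|x|=\varepsilon$, so a positive inner piece yields the required $k+1$ zeros. On the middle annulus, $a_\varepsilon=\lambda_1(A(\varepsilon,r_k))\to\mu_k$ by Lemma \ref{autovalorescontinuos}, so $(\lambda_1(A(\varepsilon,r_k))-\mu_k)\,\pi(r_k^2-\varepsilon^2)\to 0$, exactly as in Lemma \ref{subcritico}.

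The main obstacle is the inner piece. Using the first Dirichlet eigenfunction $\phi_1(B_\varepsilon)$ as in Lemma \ref{subcritico} gives a central contribution
$$(\lambda_1(B_\varepsilon)-\mu_k)\pi\varepsilon^2 \;=\; \pi j_{0,1}^2 \;-\; \pi\mu_k\varepsilon^2\;\longrightarrow\;\pi j_{0,1}^2 \;>\;0,$$
so the Lemma~\ref{subcritico} argument fails in the critical scaling $N=2p$. To repair this, one must replace $\phi_1(B_\varepsilon)$ by a different positive radial profile on $B_\varepsilon$ producing an inner coefficient $a_\varepsilon\geq\mu_k$ with $\int_{B_\varepsilon}(a_\varepsilon-\mu_k)\to 0$. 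The natural candidate is furnished by the $k=0,\,N=2$ case of \cite[Lem.~3.1]{camovijfa}: that lemma produces, for every $\eta>0$, a radial function solving a Neumann problem on a two-dimensional ball with coefficient $b\in\Gamma_0$ and $\int b<\eta$. Rescaling such a family into $B_\varepsilon$, choosing the scaling and normalizing constants so as to match the value and derivative of $\phi_1(A(\varepsilon,r_k))$ at $|x|=\varepsilon$, and then shifting the coefficient by $\mu_k$ to enforce the pointwise bound $a_\varepsilon\geq\mu_k$, yields an admissible $a_\varepsilon\in\Gamma_k$. The subtle step is the $C^1$ matching at $|x|=\varepsilon$: the inner profile has one free parameter (an overall scale) to meet two matching conditions, so the construction of \cite[Lem.~3.1]{camovijfa} must be invoked in a form that allows prescribing boundary data while keeping $\int b$ arbitrarily small — in dimension two this is possible because the rescaling $\int b(x)\,dx$ is scale-invariant. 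Once the matching is done, the estimate
$$\|a_\varepsilon-\mu_k\|_{L^1(B_1)}\;\leq\;\int_{B_\varepsilon}(a_\varepsilon-\mu_k)\,dx\;+\;(\lambda_1(A(\varepsilon,r_k))-\mu_k)\,\pi(r_k^2-\varepsilon^2)$$
shows that both terms can be made arbitrarily small, completing the proof.
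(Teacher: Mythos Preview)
Your overall strategy is right, and you correctly isolate the obstacle: in dimension two the inner--ball contribution from the Lemma~\ref{subcritico} construction does not vanish. The gap is in the proposed repair. The functions produced by \cite{camovijfa} for $k=0$, $N=2$ (Lem.~3.2 there, not 3.1) solve a \emph{Neumann} problem: their radial derivative vanishes on the boundary. After any rescaling into $B_\varepsilon$ and multiplication by a constant, the radial derivative at $|x|=\varepsilon$ is still zero. But the middle piece $\phi_1(A(\varepsilon,r_k))$ vanishes at $|x|=\varepsilon$ with \emph{nonzero} radial derivative, so $C^1$ matching is impossible: the value condition forces the inner piece to vanish at $|x|=\varepsilon$, and then the derivative condition is incompatible with a Neumann profile. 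Your invocation of scale--invariance only controls the size of $\int b$, not the shape of the boundary data. In addition, ``shifting the coefficient by $\mu_k$'' is not a legitimate move: it changes the equation, so $u$ would cease to be a solution. The correct way to enforce $a_\varepsilon\ge\mu_k$ on $B_\varepsilon$ is to observe that the rescaled inner coefficient is $\varepsilon^{-2}A(x/\varepsilon)$ and to take $\varepsilon$ small.

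What the paper does is construct by hand a one--parameter family $v_\alpha$ of positive radial functions on $B_1$ with \emph{Dirichlet} data $v_\alpha|_{\partial B_1}=0$ and $v_\alpha'(1)\neq 0$, together with their coefficients $A_\alpha\ge 0$, and checks directly that $\int_{B_1}A_\alpha\to 0$ as $\alpha\to 0$. The Dirichlet boundary value is exactly what makes the $C^1$ gluing at $|x|=\varepsilon$ work (both pieces vanish there, and one free multiplicative constant matches the derivatives), while the positivity of $\inf_{B_1} A_\alpha$ lets one take $\varepsilon$ small enough that $\varepsilon^{-2}A_\alpha(x/\varepsilon)\ge\mu_k$. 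The missing idea in your sketch is precisely this: the inner profile must carry Dirichlet, not Neumann, boundary behaviour while still having an $L^1$--small coefficient, and for $N=2$ that requires an explicit construction (with a logarithmic singularity cut off near the origin) rather than a direct appeal to the $k=0$ Neumann result.
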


\begin{proof}
If $k=0$, this lemma follows from \cite[Lem. 3.2]{camovijfa}. In
this lemma a family of bounded, positive and radial solutions were
used. Hence, for the rest of the proof we will consider $k\geq 1$.

Similarly to the proof of the previous lemma, we will construct
some explicit sequences in $\Gamma_k$. In this case, this
construction will be slightly more complicated. First, for every
$\alpha\in (0,1)$, define $v_\alpha, A_\alpha :B_1\rightarrow
\real$ as the radial functions:

\begin{equation}  v_\alpha(r) = \left \{%
\begin{array}{l}
\alpha (1-r^2)(3-r^2)-\log r, \ \ \ \ \ \ \ \ \ \ \ \ \ \ \mbox{if} \ \ \ \alpha\leq r < 1 , \\ \\
\alpha (1-r^2)(3-r^2)-\log
\alpha+\displaystyle{\frac{\alpha^2-r^2}{2\alpha^2}}, \ \ \ \ \ \
\ \ \ \ \ \ \ \ \mbox{if} \ \ \ r < \alpha ,
\end{array}
\right.
\end{equation}

\begin{equation}  A_\alpha(r) = \left \{%
\begin{array}{l}
\displaystyle{\frac{16\alpha(1-r^2)}{\alpha (1-r^2)(3-r^2)-\log r}}, \ \ \ \ \ \ \ \ \ \ \ \ \ \ \mbox{if} \ \ \ \alpha< r < 1 , \\ \\
\displaystyle{\frac{16\alpha(1-r^2)+\frac{2}{\alpha^2}}{\alpha
(1-r^2)(3-r^2)-\log
\alpha+\displaystyle{\frac{\alpha^2-r^2}{2\alpha^2}}}}, \ \ \ \ \
\ \ \ \ \ \ \ \ \ \mbox{if} \ \ \ r < \alpha ,
\end{array}
\right.
\end{equation}

\noindent where $r=\vert x\vert$. It is easily seen that
$v_\alpha\in C^1(\overline{B_1})$, $A_\alpha\in L^\infty(B_1)$,
and

\begin{equation}
\left.
\begin{array}{cl}
\Delta v_\alpha(x)+A_\alpha(x)v_\alpha(x)= 0, & x\in B_1 \, \\
 v_\alpha(x)=0,\ \ \ \ \ \ \ &  x\in \partial B_1
\end{array}
\right\}
\end{equation}

Now, for every $\alpha\in(0,1)$ and $\varepsilon\in(0,r_k)$,
define $u_{\alpha,\varepsilon}:B_1\rightarrow \real$ as the radial
function:

\begin{equation}\label{solucionsubcritico2} u_{\alpha,\varepsilon}(x) = \left \{%
\begin{array}{l}
\varphi_k(x), \ \ \ \ \ \ \ \ \ \ \ \ \ \ \mbox{if} \ \ \ r_k\leq \vert x\vert < 1 , \\ \\
\phi_1\left( A(\varepsilon,r_k)\right)(x), \  \mbox{if} \ \ \ \varepsilon \leq \vert x\vert < r_k , \\ \\
\displaystyle{v_\alpha\left(\frac{x}{\varepsilon}\right)}, \ \ \ \
\ \ \ \ \mbox{if} \ \ \ \vert x\vert < \varepsilon .
\end{array}
\right.
\end{equation}

\noindent where the eigenfunctions $\varphi_k$ and $\phi_1\left(
A(\varepsilon,r_k)\right)$ are chosen such that
$u_{\alpha,\varepsilon}\in C^1(\overline{B_1})$.

An easy computation shows that $u_{\alpha,\varepsilon}$ is a
solution of (\ref{ceq6}), being $a_{\alpha,\varepsilon}\in
L^\infty(B_1)$ the radial function

\begin{equation}\label{bisinfimosubcritico} a_{\alpha,\varepsilon}(x) = \left \{%
\begin{array}{l}
\mu_k, \ \ \ \ \ \ \ \ \ \ \ \ \ \ \mbox{if} \ \ \ r_k< \vert x\vert < 1 , \\ \\
\lambda_1\left( A(\varepsilon,r_k)\right), \  \mbox{if} \ \ \ \varepsilon < \vert x\vert < r_k , \\ \\
\displaystyle{\frac{1}{\varepsilon^2}A_\alpha\left(\frac{x}{\varepsilon}\right)},
\ \ \ \ \ \ \ \ \mbox{if} \ \ \ \vert x\vert < \varepsilon .
\end{array}
\right.
\end{equation}

\noindent Again, using that the first Dirichlet eigenvalue
$\lambda_1(\Omega)$ is strictly decreasing with respect to the the
domain $\Omega$, it follows that

$$\lambda_1\left(A(\varepsilon,r_k)\right)>\lambda_1\left(B_{r_k}\right)=\mu_k.$$

Moreover, $\displaystyle{\inf_{\vert x\vert <\varepsilon}
a_{\alpha,\varepsilon}(x)=\left(\inf_{x\in B_1}
A_\alpha(x)\right)/\varepsilon^2:=m_\alpha/\varepsilon^2}$. We see
at once that $m_\alpha>0$ for every $\alpha\in(0,1)$. Hence, if we
fix $\alpha$ and choose $\varepsilon\in(0,1)$ such that
$\displaystyle{m_\alpha/\varepsilon^2\geq \mu_k}$, it is deduced
that $a_{\alpha,\varepsilon}\in\Gamma_k$.

Let us estimate the $L_1$-norm of $a_{\alpha,\varepsilon}-\mu_k$:

\begin{equation}\label{chorizo2} \begin{array}{l}
\displaystyle{\Vert a_{\alpha,\varepsilon}-\mu_k
\Vert_{L^1(B_1)}=\int_{B_\varepsilon}\left(\frac{1}{\varepsilon^2}A_\alpha\left(\frac{x}{\varepsilon}\right)
-\mu_k\right) dx+\int_{A(\varepsilon,r_k)}\left(\lambda_1 \left(
A(\varepsilon,r_k)\right)-\mu_k\right)dx}.
\end{array}
\end{equation}

Doing the change of variables $x=\varepsilon y$ in the first
integral and applying Lemma \ref{autovalorescontinuos} in the
second one, it is obtained, for fixed $\alpha\in (0,1)$:

$$\lim_{\varepsilon \to 0}\Vert a_{\alpha,\varepsilon}-\mu_k \Vert_{L^1(B_1)}=\int_{B_1}A_\alpha (y)dy.$$

Thus, from the definition of $\gamma_{1,k}$ we have

\begin{equation}\label{alfa}
\gamma_{1,k}\leq \int_{B_1}A_\alpha (y)dy\, ,\ \ \forall \alpha\in
(0,1).
\end{equation}
Now we will take limit when $\alpha$ tends to $0$ in this last
expression. For this purpose we first deduce easily from the
definition of $A_\alpha$ that $A_\alpha(r)\leq 16\alpha
(1-r^2)/(-\log r)\leq 32\alpha$ if $r\in (\alpha,1)$ and
$A_\alpha(r)\leq \left(16\alpha+2/\alpha^2\right)/(-\log \alpha)$
if $r\in (0,\alpha)$. It follows that

$$\int_{B_1}A_\alpha (y)dy=2\pi\int_0^1 r \, A_\alpha(r)dr\leq 2\pi\int_0^\alpha r\,\frac{16\alpha+2/\alpha^2}{-\log\alpha}dr+2\pi\int_\alpha^1 r \,32\alpha dr$$
$$=\pi\frac{16\alpha^3+2}{-\log\alpha}+32\pi\alpha(1-\alpha^2),$$

\noindent which gives $\lim_{\alpha\to 0}\int_{B_1}A_\alpha
(y)dy=0$ and the lemma follows from (\ref{alfa}). \end{proof}

\section{The supercritical case}

In this section, we study the supercritical case, i.e.
$p>\frac{N}{2}$, if $N\geq 2$. In all those cases we will prove
that $\gamma_{p,k}$ is strictly positive and that it is attained.
We begin by studying the case $p=\infty.$

\begin{lemma}\label{linfinito}
Let $k\geq 0$, $N\geq 2$ and $p=\infty$. Then
$\gamma_{\infty,k}=\mu_{k+1}-\mu_k$ is attained in the unique
element $a_0\equiv \mu_{k+1}\in\Gamma_k$.
\end{lemma}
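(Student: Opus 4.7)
The argument naturally splits into an explicit upper bound and a matching lower bound combined with uniqueness.

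For the upper bound, I verify directly that $a_0 \equiv \mu_{k+1}$ belongs to $\Gamma_k \cap L^\infty(B_1)$: as a constant strictly larger than $\mu_k$ it satisfies $\mu_k \prec a_0$, and the radial Neumann eigenfunction $\varphi_{k+1}$ is itself a nontrivial radial solution of~(\ref{ceq6}) with $a = a_0$. Since $\Vert a_0 - \mu_k \Vert_\infty = \mu_{k+1} - \mu_k$, this gives $\gamma_{\infty,k} \leq \mu_{k+1} - \mu_k$.

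For the matching lower bound together with uniqueness, I take any $a \in \Gamma_k \cap L^\infty(B_1)$ with $\Vert a - \mu_k \Vert_\infty \leq \mu_{k+1} - \mu_k$; since $a \geq \mu_k$ a.e., this is equivalent to $a \leq \mu_{k+1}$ a.e., and I will show $a \equiv \mu_{k+1}$ a.e. Let $u$ be any nontrivial radial solution of~(\ref{ceq6}); by Proposition~\ref{n+1ceros}, $u$ has at least $k+1$ zeros in $(0,1)$. View $u$ as a radial eigenfunction, at the eigenvalue $\lambda = 1$, of the weighted Sturm--Liouville problem $-(r^{N-1}v')' = \lambda\, a(r)\, r^{N-1} v$ on $(0,1)$ with Neumann conditions at both endpoints. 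Standard Sturm--Liouville theory indexes its radial eigenvalues $0 = \lambda_0(a) < \lambda_1(a) < \cdots$ so that the $j$-th eigenfunction has exactly $j$ interior zeros; hence $u$ having at least $k+1$ interior zeros forces $\lambda_{k+1}(a) \leq 1$. On the other hand, the Courant--Fischer min-max formula combined with unique continuation (eigenfunctions have only isolated zeros) gives $\lambda_{k+1}(a) \geq \lambda_{k+1}(\mu_{k+1}) = 1$, with strict inequality as soon as $a < \mu_{k+1}$ on a set of positive measure. Combining both inequalities forces $a \equiv \mu_{k+1}$ a.e.

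The main obstacle I anticipate is establishing the strict Rayleigh-quotient monotonicity uniformly in $k \geq 0$ (in particular for $k = 0$ where $a$ is only nonnegative, so the weighted eigenvalue problem degenerates on the zero set of $a$). An alternative, more in the spirit of Lemma~\ref{estimas}, would be to integrate the Sturm identity $(r^{N-1}(u'\varphi_{k+1} - u\varphi_{k+1}'))' = (\mu_{k+1} - a)\, u\, \varphi_{k+1}\, r^{N-1}$ over each of the intervals $(0, x_{k+1})$, $(x_{i+1}, x_i)$ for $1 \leq i \leq k$, and $(x_1, 1)$ between the zeros of $u$ supplied by Proposition~\ref{n+1ceros}, and then use Sturm comparison together with the Neumann boundary conditions $u'(0)=u'(1)=\varphi_{k+1}'(0)=\varphi_{k+1}'(1)=0$ to locate the $k+1$ zeros of $\varphi_{k+1}$ relative to those of $u$ and derive sign contradictions on each interval that force $a \equiv \mu_{k+1}$ a.e.
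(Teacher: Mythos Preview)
Your plan is correct, but it is considerably more elaborate than what the paper does. The paper's proof is essentially one line: once $a\in\Gamma_k$ satisfies $\Vert a-\mu_k\Vert_\infty\le\mu_{k+1}-\mu_k$ and $a\not\equiv\mu_{k+1}$, one has $\mu_k\prec a\prec\mu_{k+1}$, and a classical nonresonance result (cited as Dolph~\cite{dolp} and Vidossich~\cite{vido}) immediately says that~(\ref{ceq6}) then has only the trivial solution, contradicting $a\in\Gamma_k$.

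What you are doing, in effect, is reproving that nonresonance fact from scratch. Your route---invoking Proposition~\ref{n+1ceros} to get $k+1$ zeros of $u$, then casting $u$ as an eigenfunction of the weighted problem $-(r^{N-1}v')'=\lambda\,a(r)\,r^{N-1}v$ so that Sturm oscillation gives $\lambda_{k+1}(a)\le 1$, and finally using Courant--Fischer monotonicity in the weight to get $\lambda_{k+1}(a)\ge\lambda_{k+1}(\mu_{k+1})=1$ with strict inequality unless $a\equiv\mu_{k+1}$---is sound for $k\ge 1$, where $a>\mu_k>0$ makes the weighted problem regular. Your anticipated difficulty at $k=0$ (weight $a\ge 0$ possibly vanishing on a set of positive measure) is real but easily circumvented: simply compare the \emph{unweighted} eigenvalues of $-(r^{N-1}v')'=\lambda\,r^{N-1}v$ with the eigenvalue $1$ of $-(r^{N-1}v')'=a(r)\,r^{N-1}v$ via the same Sturm comparison you sketch in your alternative, or note directly that $a\prec\mu_{k+1}$ implies $\int a\,\psi^2<\mu_{k+1}\int\psi^2$ for any eigenfunction $\psi$ (which has isolated zeros), whence the $(k+1)$-st variational eigenvalue of $-\Delta$ with potential $-a$ is strictly positive. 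Either way your argument closes.

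In summary: both proofs are correct; the paper trades a self-contained argument for a one-line citation, while your approach has the merit of being internal to the paper (it reuses Proposition~\ref{n+1ceros} and the comparison technique of Lemma~\ref{estimas}) at the cost of more work.
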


\begin{proof}
Clearly $a_0\equiv\mu_{k+1}\in\Gamma_k$ satisfies $\Vert
a_0-\mu_k\Vert_{L^\infty(B_1)}=\mu_{k+1}-\mu_k$. Suppose, contrary
to our claim, that there exists $\mu_{k+1}\not\equiv a\in\Gamma_k$
such that $\Vert a-\mu_k\Vert_{L^\infty(B_1)}\leq\mu_{k+1}-\mu_k$.
Therefore $\mu_k \prec a\prec \mu_{k+1}$, a contradiction with the
fact $a\in\Gamma_k$ (see \cite{dolp}, \cite{vido}).
\end{proof}

Next we concentrate on the case $\frac{N}{2} < p < \infty.$

\begin{lemma}\label{cerosseparados}
Let $N\geq 2$, $p>N/2$ and $M>0$. Then, there exists
$\varepsilon=\varepsilon(N,p,M)$ with the following property:

For every $a\in L^p(B_1)$ satisfying $\Vert a\Vert_{L^p(B_1)}\leq
M$ and every $u\in H^1(B_1)$ radial nontrivial solution of
$-\Delta u=a\, u$ in $B_1$ we have
\begin{enumerate}
\item[i)] $z>\varepsilon$ for every zero $z$ of $u$. \item[ii)]
$\vert z_2-z_1\vert>\varepsilon$ for every different zeros
$z_1,z_2$ of $u$.
\end{enumerate}
\end{lemma}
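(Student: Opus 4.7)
The plan is to reduce both statements to a single variational estimate: if $\Omega\subset B_1$ is a bounded open set and $w\in H_0^1(\Omega)\setminus\{0\}$ is a weak solution of $-\Delta w=aw$ in $\Omega$ with $\|a\|_{L^p(\Omega)}\leq M$, then $|\Omega|$ is bounded below by an explicit constant $\delta=\delta(N,p,M)>0$. Granting this, part (i) follows by letting $z_*\in(0,1]$ be the smallest zero of $u$; since $p>N/2$, elliptic regularity makes the radial profile of $u$ continuous and unique continuation prevents $u$ from vanishing identically on any nonempty open set, so $u|_{B_{z_*}}\in H_0^1(B_{z_*})\setminus\{0\}$ and the estimate yields $\omega_Nz_*^N/N=|B_{z_*}|\geq\delta$, i.e., $z\geq z_*\geq(N\delta/\omega_N)^{1/N}=:\varepsilon_1$. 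For part (ii), given two distinct zeros $z_1<z_2$ we may assume them consecutive (any non-consecutive pair contains a closer pair between them); then $u|_{A(z_1,z_2)}\in H_0^1(A(z_1,z_2))\setminus\{0\}$, the estimate gives $\omega_N(z_2^N-z_1^N)/N\geq\delta$, and the elementary bound $z_2^N-z_1^N\leq Nz_2^{N-1}(z_2-z_1)\leq N(z_2-z_1)$ forces $z_2-z_1\geq\delta/\omega_N=:\varepsilon_2$. One finishes with $\varepsilon:=\min(\varepsilon_1,\varepsilon_2)$.

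To prove the variational estimate, I would test $-\Delta w=aw$ against $w$, integrate by parts using the vanishing trace, and obtain $\|\nabla w\|_{L^2(\Omega)}^2=\int_\Omega aw^2$. H\"older with conjugate exponents $p$ and $p/(p-1)$ gives $\int_\Omega aw^2\leq M\|w\|_{L^{2p'}(\Omega)}^2$ where $2p':=2p/(p-1)$. The decisive ingredient is then a Sobolev inequality with the correct volume scaling,
\[
\|w\|_{L^{2p'}(\Omega)}^2\;\leq\;C(N,p)\,|\Omega|^{(2p-N)/(pN)}\,\|\nabla w\|_{L^2(\Omega)}^2,\qquad\forall\,w\in H_0^1(\Omega),
\]
with exponent strictly positive because $p>N/2$. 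Combining the three inequalities and cancelling $\|\nabla w\|_{L^2}^2>0$ yields $1\leq MC(N,p)|\Omega|^{(2p-N)/(pN)}$, and hence $|\Omega|\geq(MC(N,p))^{-pN/(2p-N)}$, which is the sought-after $\delta$.

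The volume-weighted Sobolev inequality is the main technical point and would be proved differently in each dimensional range. When $N\geq 3$, I would extend $w$ by zero to $\real^N$, apply the dimensionless embedding $\|w\|_{L^{2^*}(\real^N)}\leq C(N)\|\nabla w\|_{L^2}$ with $2^*=2N/(N-2)$, and use H\"older between $L^{2p'}$ and $L^{2^*}$; the hypothesis $p>N/2$ guarantees $2p'<2^*$ and yields exactly the factor $|\Omega|^{1/(2p')-1/2^*}=|\Omega|^{(2p-N)/(2pN)}$ before squaring. When $N=2$ there is no critical exponent to interpolate with, so I would instead combine the Gagliardo--Nirenberg inequality $\|w\|_{L^{2p'}(\real^2)}\leq C(p)\|\nabla w\|_{L^2}^{1/p}\|w\|_{L^2}^{(p-1)/p}$ with the Faber--Krahn lower bound $\lambda_1(\Omega)\geq c(2)/|\Omega|$, which furnishes $\|w\|_{L^2(\Omega)}^2\leq c(2)^{-1}|\Omega|\|\nabla w\|_{L^2(\Omega)}^2$ on $H_0^1(\Omega)$ and reproduces the same exponent $(p-1)/p=(2p-N)/(pN)$ for $N=2$. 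Once this scaling is secured the remainder is a short computation; the only other delicate point is the appeal to unique continuation and to the continuity of the radial profile of $u$ (both available under $a\in L^p$, $p>N/2$), which ensures that the restrictions used in parts (i) and (ii) are indeed nontrivial.
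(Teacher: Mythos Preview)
Your proof is correct and shares its core with the paper's: test the equation against $u$, integrate by parts on a subdomain where $u$ vanishes on the boundary, apply H\"older, and invoke a scaled Sobolev inequality. The execution for part~(ii), however, is genuinely different. The paper first uses part~(i) to secure $z_1\geq\varepsilon_1$, then exploits the radial structure to bound $r^{N-1}$ above by $1$ and below by $\varepsilon_1^{N-1}$, reducing the annulus estimate to a \emph{one-dimensional} Sobolev quotient on $(z_1,z_2)$ whose scaling is $(z_2-z_1)^{1/p-2}$; the existence of the relevant minimizers is obtained from the compact embedding $H_0^1\subset L^{2p/(p-1)}$ (available for all $N\geq 2$ since $2p/(p-1)<2N/(N-2)$). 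Your route stays $N$-dimensional throughout: you prove a uniform lower bound $|\Omega|\geq\delta(N,p,M)$ for any domain supporting a nontrivial Dirichlet solution, via the critical Sobolev embedding plus H\"older for $N\geq 3$, and Gagliardo--Nirenberg plus Faber--Krahn for $N=2$, and then read off the ball or annulus volume. Your version is more general---it never uses that $\Omega$ is a ball or annulus---and treats (i) and (ii) symmetrically, at the cost of invoking somewhat heavier external inputs (Faber--Krahn in dimension two). The paper's reduction to a one-dimensional problem is more elementary and self-contained, but is tied to the radial setting. One minor remark: the detours through the ``smallest zero'' in part~(i) and ``consecutive zeros'' in part~(ii) are unnecessary, since your volume bound applies directly to $B_z$ and $A(z_1,z_2)$ for \emph{any} zero $z$ or pair $z_1<z_2$, with nontriviality of the restriction guaranteed by unique continuation.
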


\begin{proof}
Let $z\in (0,1]$ be a zero of $u$. Hence, multiplying the equation
$-\Delta u=a\, u$ by $u$, integrating by parts in the ball $B_z$
and applying H\"{o}lder inequality, we obtain

$$\int_{B_z}\vert\nabla u\vert^2=\int_{B_z}a\, u^2\leq \Vert a\Vert_{L^p(B_z)}\Vert u\Vert^2_{L^\frac{2p}{p-1}(B_z)}.$$

From the above it follows that

$$M\geq\Vert a\Vert_{L^p(B_1)}\geq\Vert a\Vert_{L^p(B_z)}\geq\frac{\Vert\nabla u\Vert^2_{L^2(B_z)}}{\Vert u\Vert^2_{L^\frac{2p}{p-1}(B_z)}}\geq
\min_{v\in H^1_0(B_z)}\frac{\Vert\nabla v\Vert^2_{L^2(B_z)}}{\Vert
v\Vert^2_{L^\frac{2p}{p-1}(B_z)}}.$$

From the change $w(x)=v(z\, x)$, it is easily deduced that

$$\min_{v\in H^1_0(B_z)}\frac{\Vert\nabla v\Vert^2_{L^2(B_z)}}{\Vert v\Vert^2_{L^\frac{2p}{p-1}(B_z)}}=z^{\frac{N}{p}-2}\min_{w\in H^1_0(B_1)}\frac{\Vert\nabla w\Vert^2_{L^2(B_1)}}{\Vert w\Vert^2_{L^\frac{2p}{p-1}(B_1)}}:=z^{\frac{N}{p}-2}\alpha(N,p),$$

\noindent where we have used the compact embedding
$\displaystyle{H_0^1(B_1)\subset L^\frac{2p}{p-1}(B_1)}$ (since
$p>N/2$, then $2<\frac{2p}{p-1}<\frac{2N}{N-2}$, which is the
critical Sobolev exponent). Thus, taking $\varepsilon_1>0$ such
that $M<{\varepsilon_1}^{\frac{N}{p}-2}\alpha(N,p)$, we conclude
part i) of the lemma with $\varepsilon=\varepsilon_1$.

For the second part of the lemma, consider two zeros $0<z_1<z_2<1$
of $u$. Taking into account that $z_1\geq\varepsilon_1$ and
arguing in the same manner of part i), we obtain

$$M\geq\Vert a\Vert_{L^p(B_1)}\geq\Vert a\Vert_{L^p\left(A\left( z_1,z_2\right)\right)}\geq\frac{\Vert\nabla u\Vert^2_{L^2\left(A\left( z_1,z_2\right)\right)}}{\Vert u\Vert^2_{L^\frac{2p}{p-1}\left(A\left( z_1,z_2\right)\right)}}=$$

$$\displaystyle{\frac{\omega_N\int_{z_1}^{z_2}r^{N-1}u'(r)^2dr}{\left(\int_{z_1}^{z_2}\omega_N r^{N-1}\vert u(r)\vert^{2p/(p-1)}dr\right)^{(p-1)/p}}\geq \omega_N^{1/p}\varepsilon_1^{N-1}\frac{\Vert u' \Vert^2_{L^2(z_1,z_2)}}{\Vert u\Vert^2_{L^\frac{2p}{p-1}(z_1,z_2)}}}.$$

On the other hand, from the one dimensional change of variable
$w(x)=v(z_1+(z_2-z_1)x)$, it is immediate that

$$\min_{v\in H^1_0(z_1,z_2)}\frac{\Vert v' \Vert^2_{L^2(z_1,z_2)}}{\Vert v\Vert^2_{L^\frac{2p}{p-1}(z_1,z_2)}}=(z_2-z_1)^{\frac{1}{p}-2}\min_{w\in H^1_0(0,1)}\frac{\Vert w' \Vert^2_{L^2(0,1)}}{\Vert w\Vert^2_{L^\frac{2p}{p-1}(B_1)}}:=(z_2-z_1)^{\frac{1}{p}-2}C_p.$$

It follows that
$M\geq\omega_N^{1/p}\varepsilon_1^{N-1}(z_2-z_1)^{1/p-2}C_p$. From
this, taking $\varepsilon_2$ such that
$M<\omega_N^{1/p}\varepsilon_1^{N-1}\varepsilon_2^{1/p-2}C_p$, we
conclude part ii) of the lemma with $\varepsilon=\varepsilon_2$.

Obviously, taking $\varepsilon=\min\{ \varepsilon_1,\varepsilon_2
\}$, the lemma is proved.

\end{proof}

\begin{lemma}\label{sealcanza}
Let $k\geq 0$, $N\geq 2$ and $N/2<p<\infty$. Then $\gamma_{p,k}$
is strictly positive and it is attained in a function
$a_0\in\Gamma_k$.
\end{lemma}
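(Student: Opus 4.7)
For the strict positivity of $\gamma_{p,k}$ I argue by contradiction. Suppose $\gamma_{p,k}=0$ and pick $(a_n)\subset\Gamma_k$ with $\|a_n-\mu_k\|_{L^p(B_1)}\to 0$, so that $\|a_n\|_{L^p(B_1)}\le M$ uniformly. Let $u_n$ be a radial nontrivial solution of $-\Delta u_n=a_n u_n$ in $B_1$ with Neumann boundary conditions, normalized by $\|u_n\|_{H^1(B_1)}=1$. Because $p>N/2$, the embedding $H^1(B_1)\hookrightarrow L^{2p/(p-1)}(B_1)$ is compact, so along a subsequence $u_n\rightharpoonup u$ in $H^1$ and $u_n\to u$ strongly in $L^2$ and in $L^{2p/(p-1)}$. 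If $u\equiv 0$, then $\|u_n\|_{L^2}\to 0$ and $\|u_n\|_{L^{2p/(p-1)}}\to 0$; testing the equation with $u_n$ gives $\|\nabla u_n\|_{L^2}^2\le\|a_n\|_{L^p}\|u_n\|_{L^{2p/(p-1)}}^2\to 0$, contradicting $\|u_n\|_{H^1(B_1)}=1$. Hence $u\not\equiv 0$. Passing to the weak limit in $\int_{B_1}\nabla u_n\cdot\nabla\eta=\int_{B_1}a_n u_n\eta$ for every $\eta\in H^1(B_1)$ (weak $L^p$ times strong $L^{p/(p-1)}$ pairing on the right) yields $-\Delta u=\mu_k u$ in $B_1$ under Neumann conditions, so $u$ is a nonzero multiple of $\varphi_k$ and has exactly $k$ zeros in $(0,1)$.

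To close the contradiction I must transfer zeros from $u_n$ to $u$. A Brezis--Kato/Moser bootstrap, available thanks to $p>N/2$, provides a uniform $L^\infty$ bound on $u_n$; hence $a_n u_n$ is bounded in $L^p$ and elliptic regularity gives a uniform $W^{2,p}(B_1)$ bound. The compact embedding $W^{2,p}(B_1)\hookrightarrow C^0(\overline{B_1})$, again valid for $p>N/2$, upgrades the convergence to $u_n\to u$ uniformly on $\overline{B_1}$. By Proposition \ref{n+1ceros} each $u_n$ has at least $k+1$ zeros in $(0,1)$, and by Lemma \ref{cerosseparados} these zeros lie in $[\varepsilon,1]$ and are pairwise $\varepsilon$-separated for some $\varepsilon=\varepsilon(N,p,M)>0$. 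Extracting limit points produces at least $k+1$ distinct points in $(0,1)$ at which $u$ vanishes, contradicting the fact that $\varphi_k$ has only $k$ zeros there.

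For attainment, take a minimizing sequence $(a_n)\subset\Gamma_k$ with $\|a_n-\mu_k\|_{L^p(B_1)}\to\gamma_{p,k}>0$. Reflexivity of $L^p$ yields $a_n\rightharpoonup a_0$ in $L^p(B_1)$ along a subsequence, with $a_0$ radial. The same compactness-and-regularity argument as above produces normalized radial solutions $u_n\to u_0\not\equiv 0$ weakly in $H^1$, strongly in $L^{2p/(p-1)}$, and uniformly on $\overline{B_1}$, with $-\Delta u_0=a_0 u_0$ in $B_1$ under Neumann conditions (the weak $L^p$-convergence of $a_n$ pairs with the strong convergence of $u_n\eta$ in the conjugate space). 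Weak $L^p$-convergence tested against the indicator of $\{a_0<\mu_k\}$ preserves the inequality $a_n\ge\mu_k$ in the limit, so $a_0\ge\mu_k$ a.e. Uniform convergence together with Lemma \ref{cerosseparados} transfers at least $k+1$ zeros of the $u_n$'s to $u_0$ in $(0,1)$, so $u_0$ is not a multiple of $\varphi_k$; this forces $a_0\not\equiv\mu_k$, that is $\mu_k\prec a_0$, and hence $a_0\in\Gamma_k$. Weak lower semicontinuity of the $L^p$-norm gives $\|a_0-\mu_k\|_{L^p}\le\liminf\|a_n-\mu_k\|_{L^p}=\gamma_{p,k}$, matched by the reverse inequality from the definition of the infimum, so $a_0$ attains the minimum. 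The decisive technical point, and the one that breaks down at $p=N/2$, is this upgrade from weak $H^1$ convergence to uniform convergence; it requires the compact embeddings $H^1\hookrightarrow L^{2p/(p-1)}$ and $W^{2,p}\hookrightarrow C^0(\overline{B_1})$, and hence the strict inequality $p>N/2$.
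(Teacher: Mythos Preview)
Your argument is correct and uses the same two pillars as the paper---Proposition~\ref{n+1ceros} (at least $k+1$ zeros) and Lemma~\ref{cerosseparados} (zeros stay away from the origin and are $\varepsilon$-separated)---but the route to uniform convergence and the logical organization differ.

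The paper runs a \emph{single} minimizing-sequence argument: it extracts $a_n\rightharpoonup a_0$ in $L^p$ and $u_n\to u_0$ strongly in $H^1$ (via $\int|\nabla u_n|^2=\int a_n u_n^2\to\int a_0 u_0^2=\int|\nabla u_0|^2$), and then only has to rule out $a_0\equiv\mu_k$. For this it uses nothing heavier than the one-dimensional embedding $H^1_{\mathrm{rad}}(A(\varepsilon_0,1))\subset C(A(\varepsilon_0,1))$ on annuli away from the origin, combined with Lemma~\ref{cerosseparados} to forbid zeros near $r=0$ and to allow at most one zero per short interval around each $r_i$. The contradiction is phrased dually to yours: if $a_0\equiv\mu_k$ then $u_n$ has \emph{at most} $k$ zeros for large $n$, against Proposition~\ref{n+1ceros}. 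Positivity of $\gamma_{p,k}$ then falls out automatically, since $a_0\in\Gamma_k$ forces $\|a_0-\mu_k\|_{L^p}>0$.

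You instead prove positivity and attainment separately (the first is in fact subsumed by the second), and you obtain uniform convergence on all of $\overline{B_1}$ via Brezis--Kato/Moser iteration and $W^{2,p}$ elliptic regularity followed by $W^{2,p}\hookrightarrow C^0$. This is heavier machinery than the paper needs, but it buys a cleaner zero-transfer: you pass $k+1$ separated zeros of $u_n$ directly to $k+1$ distinct zeros of $u_0$ and conclude $a_0\not\equiv\mu_k$. Both approaches exploit $p>N/2$ at the same decisive place, namely the compactness that upgrades weak $H^1$ convergence to $C^0$ convergence (on annuli in the paper, globally in your version).
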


\begin{proof}

Take a sequence $\{a_n\}\subset\Gamma_k$ such that $\Vert
a_n-\mu_k\Vert_{L^p(B_1)}\to\gamma_{p,k}$. Take $\{u_n\}\subset
H^1(B_1)$ such that $u_n$ is a radial solution of (\ref{ceq6}),
for $a=a_n$, with the normalization $\Vert
u_n\Vert_{H^1(B_1)}^2=\int_{B_1}\left(\vert\nabla
u_n\vert^2+u_n^2\right)=1$. Therefore, we can suppose, up to a
subsequence, that $u_{n} \rightharpoonup u_{0}$ in $H^1(B_1)$ and
$u_{n} \rightarrow u_{0}$ in $L^\frac{2p}{p-1}(B_1)$ (since
$p>N/2$, then $2<\frac{2p}{p-1}<\frac{2N}{N-2}$, which is the
critical Sobolev exponent). On the other hand, since $\{a_n\}$ is
bounded in $L^p(B_1)$, and $1\leq N/2<p<\infty$, we can assume, up
to a subsequence, that $a_{n} \rightharpoonup a_{0}$ in
$L^p(B_1)$. Taking limits in the equation (\ref{ceq6}), for
$a=a_n$ and $u=u_n$, we obtain that $u_0$ is a solution of this
equation for $a=a_0$. Note that $u_{n} \rightarrow u_{0}$ in
$L^\frac{2p}{p-1}(B_1)$ and $a_{n} \rightharpoonup a_{0}$ in
$L^p(B_1)$ yields $\lim\int_{B_1}\vert\nabla
u_n\vert^2=\lim\int_{B_1}a_n u_n^2=\int_{B_1}a_0
u_0^2=\int_{B_1}\vert\nabla u_0\vert^2$ and consequently
$u_n\rightarrow u_0\not\equiv 0$ in $H^1(B_1)$. Therefore, if
$a_0\not\equiv \mu_k$, then $a_0\in\Gamma_k$ and $\Vert
a_0-\mu_k\Vert_p\leq\lim_{n\to\infty}\Vert
a_n-\mu_k\Vert_p=\gamma_{p,k}$, and the lemma follows.

On the contrary, suppose by contradiction that $a_0\equiv\mu_k$.
Then $u_0=\varphi_k$ for some nontrivial radial eigenfunction
$\varphi_k$. Consider $\varepsilon$ given in Lemma
\ref{cerosseparados}. Take $\varepsilon_0=\min\left\{ \varepsilon,
2r_k/3, 2(1-r_1), r_i-r_{i+1};\ 1\leq i\leq k-1\right\}$. Thus,
from the previous lemma, $u_n$ has no zeros in
$(0,\varepsilon_0)$, and has, at most, one zero in each of the $k$
disjoint intervals $(r_i-\varepsilon_0/2,r_i+\varepsilon_0/2)$,
$1\leq i\leq k$. Therefore, $u_n$ has, at most, $k$ zeros in the
set $A:=(0,\varepsilon_0)\bigcup\left ( \cup_{1\leq i\leq k}\
(r_i-\varepsilon_0/2,r_i+\varepsilon_0/2)\right )$.

On the other hand, taking into account the continuous embedding
\linebreak  $H_{rad}^1\left(A(\varepsilon_0,1)\right)\subset
C\left(A(\varepsilon_0,1)\right)$ and $u_n\rightarrow\varphi_k$ in
$H_0^1(B_1)$, we can assert $u_n\rightarrow\varphi_k$ in
$C\left(A(\varepsilon_0,1)\right)$. Clearly $\min_{r\in
(0,1]\setminus A}\vert \varphi_k(r)\vert>0$. Then, for large $n$
we see that $\min_{r\in (0,1]\setminus A}\vert u_n(r)\vert>0$,
which implies that $u_n$ does not vanish in $(0,1]\setminus A$,
for large $n$. Since $u_n$ has, at most, $k$ zeros in $A$, we
conclude that $u_n$ has, at most, $k$ zeros in (0,1], for large
$n$. This contradicts Proposition \ref{n+1ceros} and the lemma
follows.

\end{proof}

\section{The critical case}

In this section, we study the critical case, i.e. $p=\frac{N}{2}$,
if $N\geq 3$. We will prove that $\gamma_{p,k}>0$.

\begin{lemma}\label{N/2} Let $k\geq 0$, $N\geq 3$ and $p=N/2$. Then $\gamma_{p,k}>0.$

\end{lemma}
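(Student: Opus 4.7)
The plan is to argue by contradiction: assume $\gamma_{N/2,k}=0$, so there exist $a_n\in\Gamma_k$ with $\|a_n-\mu_k\|_{L^{N/2}(B_1)}\to 0$ and corresponding nontrivial radial solutions $u_n$ of (\ref{ceq6}), normalized by $\|u_n\|_{H^1(B_1)}=1$. The first step is to upgrade weak to strong $H^1$ convergence despite the loss of compactness of $H^1(B_1)\hookrightarrow L^{2N/(N-2)}(B_1)$ at the critical exponent. Passing to a subsequence, $u_n\rightharpoonup u_0$ in $H^1(B_1)$ and $u_n\to u_0$ strongly in $L^2(B_1)$. H\"older with exponents $N/2$ and $N/(N-2)$, together with the continuous embedding into $L^{2N/(N-2)}$, gives $\int_{B_1}(a_n-\mu_k)u_n^2\leq\|a_n-\mu_k\|_{L^{N/2}}\|u_n\|_{L^{2N/(N-2)}}^2\to 0$, so passing to the limit in the weak form of the equation identifies $u_0$ as a radial Neumann eigenfunction associated with $\mu_k$, hence $u_0=c\varphi_k$. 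The identity $\|u_n\|_{H^1}^2\to(1+\mu_k)\|u_0\|_{L^2}^2=\|u_0\|_{H^1}^2$ then promotes weak to strong convergence in $H^1$, and the normalization forces $c\neq 0$.

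Next I would replace Lemma \ref{cerosseparados} (which requires $p>N/2$) by a direct local Sobolev argument. For any zero $z\in(0,1]$ of $u_n$, the restriction $u_n|_{B_z}$ lies in $H^1_0(B_z)\setminus\{0\}$ by ODE uniqueness for the radial linear equation applied at $r=z$. Testing the equation against $u_n$ on $B_z$, H\"older and the Sobolev inequality $\|u\|_{L^{2N/(N-2)}(B_z)}\leq S\|\nabla u\|_{L^2(B_z)}$ (with $S$ the universal Sobolev constant in $\real^N$) yield $\|\nabla u_n\|_{L^2(B_z)}^2\leq S^2\|a_n\|_{L^{N/2}(B_z)}\|\nabla u_n\|_{L^2(B_z)}^2$, hence $\|a_n\|_{L^{N/2}(B_z)}\geq 1/S^2$. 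Since $\|a_n\|_{L^{N/2}(B_z)}\leq\|a_n-\mu_k\|_{L^{N/2}(B_1)}+\mu_k|B_z|^{2/N}$, this forces $z\geq c_0>0$ for some fixed $c_0$ and all $n$ large. The identical argument on an annulus $A(z_1,z_2)$ between two zeros shows that distinct zeros of $u_n$ in $[c_0,1]$ are separated by a fixed $\delta_0>0$ for $n$ large.

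Finally, the strong $H^1$ convergence together with the one-dimensional embedding $H^1\hookrightarrow C$ for radial functions on the annulus $A(c_0/2,1)$ gives uniform convergence $u_n\to c\varphi_k$ on $[c_0/2,1]$. Choosing $\eta<\delta_0/2$ small enough that the intervals $(r_i-\eta,r_i+\eta)$ are pairwise disjoint and $c\varphi_k$ is bounded away from zero on the complement in $[c_0/2,1]$, uniform convergence confines the zeros of $u_n$ in $[c_0,1]$ to $\bigcup_i(r_i-\eta,r_i+\eta)$, while the separation bound $\delta_0$ admits at most one zero per interval. Thus $u_n$ has at most $k$ zeros in $[c_0,1]$, and none in $(0,c_0)$ by the previous step, so at most $k$ zeros in $(0,1]$, contradicting Proposition \ref{n+1ceros}. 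The main difficulty is exactly the loss of compactness at the critical Sobolev exponent, which distinguishes this case from the supercritical one; it is overcome by exploiting the equation itself to recover strong $H^1$ convergence, and by using localized Sobolev inequalities in place of the compactness-based separation lemma \ref{cerosseparados}.
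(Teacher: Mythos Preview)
Your argument is correct and shares the paper's overall strategy: contradiction, upgrading weak to strong $H^1$ convergence via $\int_{B_1}(a_n-\mu_k)u_n^2\to 0$, a localized critical Sobolev inequality on balls, and a clash with Proposition~\ref{n+1ceros}. The only tactical difference is how the zeros away from the origin are counted. You mirror the supercritical proof by adapting Lemma~\ref{cerosseparados} to $p=N/2$ through the universal Sobolev constant, obtaining a uniform lower bound on the first zero and on the gap between consecutive zeros, and then combining $C^0$ convergence on a fixed annulus with this separation to cap the number of zeros at $k$. The paper instead upgrades to $C^1$ convergence on the annulus $A(\varepsilon,1)$ (from $-\Delta u_n=a_nu_n\to\mu_k\varphi_k$ in $L^1$), which directly gives exactly $k$ zeros there; Proposition~\ref{n+1ceros} then forces an extra zero in $(0,\varepsilon]$, and the Sobolev bound on that small ball contradicts the arbitrariness of $\varepsilon$. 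Your route stays closer in structure to the supercritical case and avoids the regularity upgrade; the paper's route avoids the annulus-separation step. Both are equally valid and of comparable length.
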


\begin{proof}
To obtain a contradiction, suppose that $\gamma_{p,k}=0$. Then we
could find a sequence $\{a_n\}\subset\Gamma_k$ such that
$a_n\rightarrow\mu_k$ in $L^{N/2}(B_1)$. Similarly to the
supercritical case, we can take $\{u_n\}\subset H^1(B_1)$ such
that $u_n$ is a radial solution of (\ref{ceq6}), for $a=a_n$, with
the normalization $\Vert u_n\Vert_{H^1(B_1)}^2=1$. Again, we can
suppose, up to a subsequence, that $u_{n} \rightharpoonup u_{0}$
in $H^1(B_1)$ and taking limits in the equation (\ref{ceq6}), for
$a=a_n$ and $u=u_n$, we obtain that $u_0$ is a solution of this
equation for $a=\mu_k$.

We claim that $u_n\rightarrow u_0$ in $H^1(B_1)$ and consequently,
$u_0=\varphi_k$, for some nontrivial eigenfunction $\varphi_k$.
For this purpose, we set

$$\lim\int_{B_1}\vert\nabla u_n\vert^2=\lim\int_{B_1}a_n u_n^2=\lim\int_{B_1}(a_n-\mu_k)u_n^2+\lim\int_{B_1}\mu_ku_n^2=$$

$$0+\mu_k\int_{B_1}u_0^2=\int_{B_1}\vert\nabla u_0\vert^2,$$

\noindent where we have used $a_n\rightarrow\mu_k$ in
$L^{N/2}(B_1)$ and $u_n^2$ is bounded in $L^{N/(N-2)}(B_1)$ (since
$u_n$ is bounded in $H^1(B_1)\subset L^{2N/(N-2)}(B_1)$). Thus,
from standard arguments, we deduce that $u_n\rightarrow
u_0=\varphi_k$ in $H^1(B_1)$.

In the following, we will fix $\varepsilon\in(0,r_k)$. Since
$a_n\rightarrow\mu_k$ in $L^{N/2}\left(A(\varepsilon,1)\right)$
and $u_n\rightarrow u_0=\varphi_k$ in
$H_{rad}^1\left(A(\varepsilon,1)\right)\subset
C\left(A(\varepsilon,1)\right)$, we can assert that
$a_nu_n\rightarrow\mu_k\varphi_k$ in
$L^{N/2}\left(A(\varepsilon,1)\right)\subset
L^1\left(A(\varepsilon,1)\right)$. Thus $-\Delta
u_n\rightarrow\mu_k\varphi_k$ in
$L^1\left(A(\varepsilon,1)\right)$, which yields
$u_n\rightarrow\varphi_k$ in $C^1\left(A(\varepsilon,1)\right)$.
It follows that, for large $n$, the number of zeros of $u_n$ is
equal to the number of zeros of $\varphi_k$ in the annulus
$A(\varepsilon,1)$, which is exactly $k$. Applying Proposition
\ref{n+1ceros} we can assert that, for large $n$ there exists a
zero $\varepsilon_n\in (0,\varepsilon]$ of $u_n$. Hence,
multiplying the equation $-\Delta u_n=a_nu_n$ by $u_n$,
integrating by parts in the ball $B_{\varepsilon_n}$ and applying
H\"{o}lder inequality, we deduce

$$\int_{B_{\varepsilon_n}}\vert\nabla u_n\vert^2=\int_{B_{\varepsilon_n}}a_n u_n^2\leq \Vert a_n\Vert_{L^{N/2}(B_{\varepsilon_n})}\Vert u_n\Vert^2_{L^{2N/(N-2)}(B_{\varepsilon_n})}.$$

From the above it follows that

$$\Vert a_n\Vert_{L^{N/2}(B_{\varepsilon_n})}\geq\frac{\Vert\nabla u_n\Vert^2_{L^2(B_{\varepsilon_n})}}{\Vert u_n\Vert^2_{L^{2N/(N-2)}(B_{\varepsilon_n})}}\geq
\inf_{u\in H^1_0(B_{\varepsilon_n})}\frac{\Vert\nabla
u\Vert^2_{L^2(B_{\varepsilon_n})}}{\Vert
u\Vert^2_{L^{2N/(N-2)}(B_{\varepsilon_n})}}.$$

From the change $v(x)=u(\varepsilon_n x)$, it is easily deduced
that

$$\inf_{u\in H^1_0(B_{\varepsilon_n})}\frac{\Vert\nabla u\Vert^2_{L^2(B_{\varepsilon_n})}}{\Vert u\Vert^2_{L^{2N/(N-2)}(B_{\varepsilon_n})}}=\inf_{v\in H^1_0(B_1)}\frac{\Vert\nabla v\Vert^2_{L^2(B_1)}}{\Vert v\Vert^2_{L^{2N/(N-2)}(B_1)}}:=C_N>0.$$

From the above it follows that, for fixed $\varepsilon\in (0,r_k)$
and large $n$, we obtain

$$C_N\leq\Vert a_n\Vert_{L^{N/2}(B_{\varepsilon_n})}\leq \Vert a_n-\mu_k\Vert_{L^{N/2}(B_{\varepsilon_n})}+\Vert \mu_k\Vert_{L^{N/2}(B_{\varepsilon_n})}\leq$$

$$\Vert a_n-\mu_k\Vert_{L^{N/2}(B_1)}+\Vert \mu_k\Vert_{L^{N/2}(B_{\varepsilon})}.$$

Taking limits when $n$ tends to $\infty$ in this expression we
deduce

$$C_N\leq \mu_k\left(\frac{\omega_N\varepsilon^N}{N}\right)^{2/N}.$$

Choosing $\varepsilon>0$ sufficiently small we obtain a
contradiction.

\end{proof}


\end{document}